\documentclass[review,onefignum,onetabnum]{siamart171218}
\usepackage{multirow}
\usepackage{amsmath}
\usepackage{color}
\usepackage{amssymb}
\usepackage{mathrsfs}
\usepackage{ulem} 

\newcommand\s{\mathbf{s}}
\newcommand\g{\mathbf{g}}
\newcommand\z{\mathbf{z}}

\newcommand\vvec{\mathbf{v}}

\newcommand\x{\mathbf{x}}
\newcommand\y{\mathbf{y}}

\newcommand\dt{\Delta t}
\newcommand\I{\mathbf{I}}

\newtheorem{assumption}{Assumption}[section]

\usepackage{lipsum}
\usepackage{amsfonts}
\usepackage{graphicx}
\usepackage{epstopdf}
\usepackage{algorithmic}
\newsiamremark{remark}{Remark}
\newsiamremark{hypothesis}{Hypothesis}
\crefname{hypothesis}{Hypothesis}{Hypotheses}
\newsiamthm{claim}{Claim}

\definecolor{midbluepurple}{RGB}{134,0,102}

\newcommand{\TheTitle}{A Second-Order
Method for Locating Critical Points}
\newcommand{\TheAuthors}{Q. Du, B. Shi}

\headers{\TheTitle}{\TheAuthors}
\title{{\TheTitle}\thanks{The work was supported by NSF DMS-2309245.
}}

\author{
Qiang Du\thanks{Department of Applied Physics and Applied Mathematics, and Data Science Institute, Columbia University, New York, NY 10027, USA (\email{qd2125@columbia.edu}).}
\and Baoming Shi\thanks{Department of Applied Physics and Applied Mathematics, Columbia University, New York, NY 10027, USA (\email{bs3705@columbia.edu}).}
}
\ifpdf
\hypersetup{
  pdftitle={\TheTitle},
  pdfauthor={\TheAuthors}
}
\fi
\renewcommand{\TheTitle}{A Second-Order
Method for Locating Critical Points of Prescribed Index on Energy Landscapes}

\begin{document}
\maketitle

\begin{abstract} 
Exploring high dimensional energy landscape is a challenging problem in many applications where both ground states and transition states offer important information about the underlying physical systems. To facilitate the computational exploration, we propose a cubic-regularized second-order method for locating critical points of prescribed index, including both energy minima (with an index $0$) and transition states (saddle points with positive index). At each step of the iteration, the proposed method involves several ingredients, such as updating the eigen-directions of the Hessian according to the prescribed index, reflecting the gradient and the Hessian along selected unstable eigen-directions, if any, and constructing the next step by solving a cubic-regularized subproblem. Under suitable assumptions, we show that, when the eigenspace and the cubic subproblem are solved exactly, the iteration converges locally and quadratically to a critical point of the prescribed index. We further establish convergence results in the inexact setting, where both the eigenspace computation and the solution of the cubic subproblem are performed approximately. We show that the resulting iteration retains local linear convergence, with the convergence rate depending on the accuracy of the inexact computations. Moreover, we prove that the fixed points of the algorithm are precisely critical points of the prescribed index. In addition, we introduce an adaptive strategy for updating the cubic regularization parameter to improve robustness and computational efficiency of the proposed algorithm. Numerical experiments confirm the predicted convergence behavior, demonstrate the effectiveness of the adaptive strategy, and illustrate the capability of the proposed method for locating critical points and constructing solution landscapes.
\end{abstract}

\begin{keywords}
energy landscape, critical point, saddle point, Morse index, second-order algorithm, cubic regularization 
\end{keywords}

\begin{AMS}
	65K10, 49M15, 90C26
\end{AMS}

\section{Introduction}
Exploring the energy landscape is a fundamental problem in many high-dimensional systems arising in a wide range of scientific applications \cite{onuchic1997theory,wales2006energy,wales2018exploring,wolynes1995navigating}. Such an exploration involves not only identifying local minima, which represent stable or metastable configurations and include the ground states of the system, but also characterizing the broader collection of critical points that structure the landscape. 
In particular, saddle points play a central role in mediating transitions between metastable states and in determining the connectivity of their basins of attraction. The locations, energies, and Morse indices of these saddle points, where the Morse index is defined as the number of negative eigenvalues of the Hessian, provide important information about transition barriers, competing pathways, and the global organization of the energy landscape \cite{vanden2010transition}. Consequently, a comprehensive understanding of high-dimensional energy landscapes requires the systematic computation and characterization of critical points across different indices in identifying transition mechanisms that cannot be inferred from local minima alone.

Locating saddle points (or critical points of positive index) is substantially more difficult than identifying local minima, since saddle points are inherently unstable and their unstable eigendirections are typically unknown a priori. A variety of numerical approaches have therefore been developed for saddle-point computation. Broadly speaking, these methods can be categorized into path-finding approaches \cite{weinan2002string,vanden2010transition} and surface-walking approaches \cite{zhang2016recent}. The former seeks minimum-energy paths connecting metastable states, along which local energy maxima typically correspond to transition states or index-one saddle points. We are primarily concerned with the latter, which generate trajectories or iterative updates directly on the energy landscape and drive the system toward saddle points. Representative examples include the min-max method \cite{li2001minimax}, the iterative minimization formulation \cite{gao2015iterative}, gentlest ascent dynamics \cite{gad,quapp2014locating,liu2023constrained}, dimer dynamics and high-index saddle dynamics (HiSD) \cite{henkelman1999dimer, zhangdu2012,2019High,su2025improved}.

A notable advantage of methods like HiSD is their ability to target critical points of a prescribed index, where an index-$k$ critical point is a critical point whose Hessian has exactly $k$ negative eigenvalues. For $k=0$, one recovers methods for computing local minima. This makes HiSD particularly suitable for constructing solution landscapes that contain multiple critical points and their connections \cite{yin2020construction}. However, HiSD with constant step size generally exhibits only a linear local convergence rate \cite{luo2022sinum}. Since the construction of a solution landscape typically requires repeated searches of local minima and saddle points, the resulting computational cost can become substantial. Newton's method, on the other hand, achieves quadratic local convergence to nondegenerate critical points, but offers no mechanism for controlling the index of the limiting critical point \cite{nocedal1999numerical}. Thus, the two approaches exhibit complementary strengths: HiSD provides index selectivity but relatively slow local convergence, whereas Newton's method provides fast convergence without index selectivity. For applications such as solution-landscape construction and transition pathway computation, it is desirable to combine these two properties. This motivates us to develop a method that targets critical points of a prescribed index while achieving faster convergence.

In this paper, we develop a cubic-regularized second-order framework for computing critical points of a prescribed index. The key algorithmic advance is rooted in the combination of the index-selection mechanism induced by the eigenspace associated with the $k$ smallest eigenvalues of the Hessian with a second-order cubic model \cite{nesterov2006cubic}. By reflecting the first- and second-order information along the selected unstable eigen-directions for critical points with positive index, the resulting local model transforms the desired saddle structure into a form suitable for cubic regularization. This construction provides a natural bridge between prescribed-index critical-point search and modern second-order optimization techniques. We establish a local convergence theory showing that the proposed iteration converges quadratically to a nondegenerate index-$k$ critical point when the eigenspace and cubic subproblems are solved exactly. We further quantify the effect of inexact subproblem solutions on the local convergence rate and establish a linear convergence rate whose contraction factor depends on the accuracy of the subproblem solutions. We also characterize the fixed points of the algorithm and show that they correspond precisely to critical points of the prescribed index. In practice, we exploit the standard structure of the cubic subproblem to incorporate existing efficient solvers and develop an adaptive regularization strategy for improved robustness and efficiency. Numerical experiments validate the theoretical results and demonstrate the effectiveness of the method in locating prescribed-index critical points and constructing solution landscapes.

The paper is organized as follows. In \Cref{sec: Saddle point}, we present the proposed algorithm, together with its theoretical properties under exact subproblem solutions. In \Cref{sec:inexact}, we extend the analysis to the inexact setting, where both the unstable eigenspace and the cubic subproblem are computed approximately. In \Cref{sec: methods for subproblem}, we discuss numerical methods for solving the cubic subproblem and introduce an adaptive strategy for updating the cubic regularization parameter. In \Cref{sec: solution landscape}, we describe how the proposed algorithm can be used to construct solution landscapes. In \Cref{sec: Numerics}, we present numerical experiments. Finally, conclusions and discussions are presented in \Cref{sec: conclusion}.

\section{Critical points and the second-order method}\label{sec: Saddle point}
Consider a system characterized by an energy function $f:\mathcal{H}\rightarrow \mathbb{R}$, where $\mathcal{H}$ is a real Hilbert space and $f$ is assumed to be sufficiently smooth. A critical point $\x^*\in\mathcal{H}$ is a configuration at which the Fr\'echet gradient vanishes, i.e., $\nabla f(\x^*)=0$. Its local stability is characterized by the Hessian operator $\nabla^2 f(\x^*)$. If the negative spectral subspace of the self-adjoint Hessian $\nabla^2 f(\x^*)$ has dimension $k$, then $\x^*$ is referred to as an index-$k$ critical point, and the corresponding negative spectral subspace is called the unstable subspace. For simplicity, throughout this work we restrict our analysis to the finite-dimensional setting $\mathcal{H}=\mathbb{R}^d$, $d\geq 2$, and assume that $f\in C^3(\mathbb{R}^d)$ has finitely many non-degenerate critical points. In this case, if $k=0$ (or $k=d$), i.e., $\nabla^2 f(\x^*)$ is positive definite (or negative definite), then $\x^*$ is a local minimum (or maximum) \cite{nocedal1999numerical}. If $1\leq k\leq d-1$, $\x^*$ is an index-$k$ saddle point. We denote the corresponding orthonormal eigenvectors by $\{\vvec_1^*,\cdots,\vvec_k^*\}$, which are referred to as the unstable eigenvectors or unstable directions.

There are various numerical methods for locating critical points of prescribed indices. For an index-$k$ saddle point  $\x^*$ ($k\geq 1$), a representative example is the saddle dynamics \cite{2019High, gad}, defined by
\begin{equation}
\dot{\x}
=
-\left(\I-2\sum_{i=1}^k \vvec_i\vvec_i^\top\right)\nabla f(\x),
\label{eq: SD}
\end{equation}
where $\{\vvec_i\}_{i=1}^k$, with $1\leq k\leq d-1$, are orthonormal eigenvectors associated with the $k$ smallest eigenvalues of $\nabla^2 f(\x)$, and $\I$ denotes the identity operator. For $k=0$ or $k=d$, the dynamics recovers the conventional gradient flow (descent or ascent) for energy minimization or maximization respectively.
Under suitable conditions, the stable equilibria of \eqref{eq: SD} correspond to index-$k$ saddle points \cite{2019High, yin2021searching}. A straightforward Euler discretization of \eqref{eq: SD} gives
\begin{equation}
\label{eq: first-order method}
\x_{j+1}
=
\x_j
-
\dt
\left(
\I-2\sum_{i=1}^k
\vvec_{j,i}\vvec_{j,i}^\top
\right)
\nabla f(\x_j).
\end{equation}
As a first-order iteration, however, \eqref{eq: first-order method} generally converges only linearly, with its local convergence rate depending on the conditioning of the Hessian \cite{luo2022sinum, du2026convergence}. In contrast, Newton's method enjoys quadratic local convergence to any nondegenerate critical point. It does not, however, distinguish the Morse index of the limiting point: depending on the initial condition, it may converge to a local minimum, a local maximum, or a saddle point of an undesired index. Hence, Newton's method provides fast local convergence but lacks the index-selection mechanism required for prescribed-index critical point computation in energy landscape exploration. 

This observation naturally leads us to second-order optimization methods. In particular, cubic regularization was originally developed as a globalization of Newton's method for minimization \cite{nesterov2006cubic}. At each iteration, it replaces the quadratic Newton model by a cubic-regularized model and seeks its minimizer. From the viewpoint of Morse index, such a method is designed to locate index-$0$ critical points, namely local minima. Our goal is to extend this cubic-regularization framework from index-$0$ local minima to critical points of an arbitrary prescribed index $k$. To this end, we incorporate the index-selection mechanism into the cubic model by reflecting the gradient and Hessian along the eigenspace associated with the $k$ smallest eigenvalues. This leads to the following cubic-regularization method for index-$k$ critical point in \Cref{algorithm 1.1}, where $\{\vvec_i\}_{i=1}^k=\mathrm{Eigensolver}(\nabla^2 f(\x_j))$ denotes the computation of the eigenvectors associated with the $k$ smallest eigenvalues of $\nabla^2 f(\x_j)$.

\begin{algorithm}
\caption{Cubic regularization method for index-$k$ critical point}
\label{algorithm 1.1}
\begin{algorithmic}
\STATE{Given $\x_0\in\mathbb{R}^d$ and maximum number of iterations $n_{max}$}

\FOR{$j$ in $\{0,\cdots,n_{max}-1\}$}
\STATE{Update the  invariant subspace associated with the $k$ smallest eigenvalues: $$\{\vvec_i\}_{i=1}^k=\mathrm{Eigensolver}(\nabla^2 f(\x_j))$$}
\STATE{Construct the reflected gradient and reflected Hessian: $$\mathbf{r}_j=\left(\I-\sum_{i=1}^k2\vvec_i\vvec_i^\top\right)\nabla f(\x_j), \ B_j=\mathrm{sym}\left(\left(\I-\sum_{i=1}^k2\vvec_i\vvec_i^\top\right) \nabla^2f(\x_j)\right)$$}
\STATE{Solve the subproblem: $$\mathbf{s}_j\in\operatorname*{arg\,min}_{\mathbf{s}\in\mathbb{R}^d}  f(\x_j)+\mathbf{r}_j^\top \mathbf{s}+\frac{1}{2}\mathbf{s}^\top B_j\mathbf{s}+\frac{\sigma_j}{3}\|\mathbf{s}\|_2^3$$}
\STATE{Update $\x_{j+1}= \x_j+\mathbf{s}_j$}
\ENDFOR
\end{algorithmic}
\end{algorithm}

To analyze the convergence of the proposed method, we first recall the following standard global optimality characterization for cubic subproblems \cite{nesterov2006cubic,cartis2011adaptive}.
\begin{lemma}
\label{prop:cubic-opt}
Let $\x ,\mathbf{r}\in\mathbb{R}^d$, let $B=B^\top \in \mathbb{R}^{d\times d}$, and let $\sigma>0$. A vector $\mathbf{s}$ is a
global minimizer of
\[
   q(\mathbf{s})=f(\x)+\mathbf{r}^\top \mathbf{s}+\frac12 \mathbf{s}^\top B\mathbf{s}+\frac{\sigma}{3}\|\mathbf{s}\|^3_2
\]
if and only if
\begin{equation}
   (B+\lambda \I)\mathbf{s}=-\mathbf{r},\quad
   B+\lambda \I\succeq0,\quad
   \lambda=\sigma\|\mathbf{s}\|_2.
   \label{eq:cubic-opt}
\end{equation}
In particular, if $B\succ0$, then $q(\mathbf{s})$ is strongly convex and its global minimizer is unique.
\end{lemma}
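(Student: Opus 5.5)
The plan is to prove the two directions separately, with the constant $f(\x)$ playing no role. For necessity we use first- and second-order optimality conditions together with a direct comparison argument. For sufficiency we use an exact expansion of $q(\s+\mathbf{h})-q(\s)$.

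\emph{Sufficiency.} Assume \eqref{eq:cubic-opt} holds with $\lambda=\sigma\|\s\|_2$, and let $\mathbf{y}$ be arbitrary. Substituting $\mathbf{r}=-(B+\lambda\I)\s$ and completing the square gives the identity
\[
q(\mathbf{y})-q(\s)=\tfrac12(\mathbf{y}-\s)^\top(B+\lambda\I)(\mathbf{y}-\s)-\tfrac{\lambda}{2}\bigl(\|\mathbf{y}\|^2-\|\s\|^2\bigr)+\tfrac{\sigma}{3}\bigl(\|\mathbf{y}\|^3-\|\s\|^3\bigr).
\]
The first term is nonnegative because $B+\lambda\I\succeq0$. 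Write $a=\|\s\|$, $b=\|\mathbf{y}\|$ and $\lambda=\sigma a$. The remaining terms equal
\[
\tfrac{\sigma}{6}\bigl(2b^3-3ab^2+a^3\bigr)=\tfrac{\sigma}{6}(b-a)^2(2b+a)\ge0 .
\]
Hence $\s$ is a global minimizer.

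\emph{Necessity.} Let $\s$ be a global minimizer. The map $\s\mapsto\|\s\|^3$ is $C^1$ with gradient $3\|\s\|\s$, so the first-order condition gives $(B+\sigma\|\s\|\I)\s=-\mathbf{r}$. Set $\lambda=\sigma\|\s\|$. The hard part is $B+\lambda\I\succeq0$. The local second-order condition only yields $B+\lambda\I+\sigma\s\s^\top/\|\s\|\succeq0$, which is too weak, so I will compare with points of equal norm. Take any $\mathbf{y}$ with $\|\mathbf{y}\|=\|\s\|$. The cubic terms then coincide, and the identity above reduces to
\[
0\le q(\mathbf{y})-q(\s)=\tfrac12(\mathbf{y}-\s)^\top(B+\lambda\I)(\mathbf{y}-\s).
\]
Every direction $\mathbf{w}$ with $\mathbf{w}^\top\s\neq0$ is a multiple of some $\mathbf{y}-\s$ with $\|\mathbf{y}\|=\|\s\|$; take $\mathbf{y}=\s-2\frac{\s^\top\mathbf{w}}{\|\mathbf{w}\|^2}\mathbf{w}$, the reflection of $\s$. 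Such directions are dense, so $B+\lambda\I\succeq0$ follows by continuity.

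The degenerate case $\s=0$ needs separate treatment. Then $\mathbf{r}=0$ and $\lambda=0$. Minimality along $t\mathbf{w}$ gives $\tfrac{t^2}{2}\mathbf{w}^\top B\mathbf{w}+\tfrac{\sigma}{3}|t|^3\|\mathbf{w}\|^3\ge0$ for all small $t$, hence $B\succeq0$.

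For the final claim, if $B\succ0$ then $q$ is a strongly convex quadratic plus the convex function $\frac{\sigma}{3}\|\s\|^3$. It is therefore strongly convex, and its global minimizer is unique.
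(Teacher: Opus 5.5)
Your proof is correct and complete. It is essentially the standard direct argument of Cartis, Gould and Toint: the exact expansion with the nonnegative remainder $\frac{\sigma}{6}(b-a)^2(2b+a)$ gives sufficiency, and comparison with equal-norm (reflected) points plus a density argument gives $B+\lambda\I\succeq0$ for necessity. The paper itself gives no proof of this lemma; it only recalls it from Nesterov--Polyak and Cartis--Gould--Toint, and your argument coincides with the proof in those references.
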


\begin{assumption}[Local regularity and strict critical point]
\label{assumption:local}
Let $\x^*$ be a strict index-$k$ critical point. There is a neighborhood
$U=\{\x,\|\x-\x^*\|_2\leq r_0\}$ of $\x^*$ and constants $M>0$, $L\geq \mu>0$ such that $\|\nabla^2f(\x)-\nabla^2f(\y)\|_2\leq M\|\x-\y\|_2, \x,\y\in U$ and the spectrum of $\nabla^2f(\x), \x \in U$ satisfies that
$$
-L \leq \lambda_1\leq \cdots \leq \lambda_k\leq -\mu <0<\mu \leq \lambda_{k+1}\leq \cdots \leq \lambda_d\leq L,
$$
with the corresponding inequalities on the negative or positive eigenvalues omitted when $k=0$ or $k=d$.
\end{assumption}

\begin{lemma}[Local positivity of the reflected Hessian]
\label{lem:positive}
Under Assumption~\ref{assumption:local}, the reflected Hessian $B(\x)=\left(\I-\sum_{i=1}^k2\vvec_i(\x)\vvec_i(\x)^\top\right) \nabla^2f(\x)\succeq \mu \I$ is a positive-definite matrix. Consequently, the reflected cubic subproblem has a unique global minimizer for
every $\x\in  U$.
\end{lemma}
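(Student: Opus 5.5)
The plan is to diagonalize the Hessian and check that the reflection acts diagonally in the same eigenbasis. Fix $\x\in U$ and write the spectral decomposition $\nabla^2 f(\x)=\sum_{i=1}^d \lambda_i \vvec_i\vvec_i^\top$. Here $\{\vvec_i\}_{i=1}^d$ is an orthonormal eigenbasis, ordered so that $\lambda_1\le\cdots\le\lambda_d$. The vectors $\vvec_1,\dots,\vvec_k$ returned by the eigensolver span the invariant subspace of the $k$ smallest eigenvalues.

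By Assumption~\ref{assumption:local}, $\lambda_k\le-\mu<0<\mu\le\lambda_{k+1}$. This spectral gap means the subspace $V_k=\mathrm{span}\{\vvec_1,\dots,\vvec_k\}$ is uniquely determined, even if individual eigenvectors inside it are not. Hence the projector $P=\sum_{i=1}^k\vvec_i\vvec_i^\top$ is well defined, and $R=\I-2P$ does not depend on the choice of basis. Its restriction to $V_k$ is $-\I$, and its restriction to $V_k^\perp$ is $\I$.

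Next, since $P$ is a spectral projector of $\nabla^2 f(\x)$, it commutes with the Hessian. A direct computation gives
\[
R\,\nabla^2 f(\x)=\sum_{i=1}^k(-\lambda_i)\vvec_i\vvec_i^\top+\sum_{i=k+1}^d\lambda_i\vvec_i\vvec_i^\top .
\]
This matrix is symmetric, so the $\mathrm{sym}(\cdot)$ in \Cref{algorithm 1.1} changes nothing, and $B_j=B(\x_j)$. Its eigenvalues are $|\lambda_1|,\dots,|\lambda_d|$, all in $[\mu,L]$ by the assumption. Therefore $\mu\I\preceq B(\x)\preceq L\I$. In the cases $k=0$ and $k=d$ we have $P=0$ or $P=\I$, so $B=\nabla^2 f$ or $B=-\nabla^2 f$, and the same bound holds.

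For the consequence, $B(\x)\succ0$ and the map $\s\mapsto\frac{\sigma}{3}\|\s\|_2^3$ is convex. Hence $q$ is strongly convex and coercive, so it has a minimizer, and that minimizer is unique; this is exactly the last clause of \Cref{prop:cubic-opt}.

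The only genuinely delicate point is the well-definedness of $P$ when eigenvalues repeat. The gap at $\pm\mu$ handles it, since within $V_k$ any orthonormal basis yields the same $P$. I expect no further obstacle.
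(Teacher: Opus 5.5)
Your proof is correct and follows the paper's argument: both identify the eigenvalues of $B(\x)$ as $-\lambda_1,\dots,-\lambda_k,\lambda_{k+1},\dots,\lambda_d$ and bound them below by $\mu$ using Assumption~\ref{assumption:local}. Your extra remarks are details the paper leaves implicit: the basis-independence of the projector from the spectral gap, the symmetry of $(\I-2P)\nabla^2 f(\x)$ (so $\mathrm{sym}$ changes nothing), and the convexity--coercivity argument for existence and uniqueness of the minimizer.
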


\begin{proof}
Since $\{\vvec_i(\x)\}_{i=1}^k$ are the eigenvectors associated with the $k$ smallest eigenvalues of $\nabla^2 f(\x)$, the eigenvalues of $B(\x)$ are $-\lambda_1,\ldots,-\lambda_k,\lambda_{k+1},\ldots,\lambda_d$. By \Cref{assumption:local}, all these eigenvalues are bounded below by $\mu$. Hence, $B(\x)\succeq \mu \I$.
\end{proof}

\begin{proposition}[Local quadratic convergence]
\label{thm:quadratic}
Suppose Assumption~\ref{assumption:local} holds and $\x_j$ is generated by \Cref{algorithm 1.1} with
$0<\sigma_{min}\leq\sigma_j\leq \sigma_{max}$, $\forall j\in\mathbb{N}$. Define
\begin{equation}
    C=\frac{2L^2\left(\sigma_{max}+\frac{M}{2}\right)}{\mu^3},
    \label{eq:quadratic-constant}
\end{equation}
and choose $r>0$ such that
\begin{equation}
    r\leq
    \min\left\{
    \frac{\mu r_0}{\mu+L},
    \frac{\mu^2}{M(\mu+L)},\frac{1}{C}
    \right\}.
    \label{eq:quadratic-radius}
\end{equation}
If $\|\x_0-\x^*\|_2\leq r$, then $\x_j\in U$ for all $j\in\mathbb{N}$ and
\begin{equation}
   \|\x_{j+1}-\x^*\|_2
   \leq C\|\x_j-\x^*\|_2^2,
   \quad \forall j \in \mathbb{N}.
   \label{eq:quadratic-rate}
\end{equation}
\end{proposition}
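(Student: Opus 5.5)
We argue by induction on $j$: assuming $\|\x_j-\x^*\|_2\le r$, we derive \eqref{eq:quadratic-rate} for this $j$ and then $\|\x_{j+1}-\x^*\|_2\le Cr^2\le r$ since $r\le 1/C$. Write $e_j=\x_j-\x^*$, $P_j=\I-2\sum_{i=1}^k\vvec_i\vvec_i^\top$ (with $\vvec_i$ the eigenvectors at $\x_j$), $H_j=\nabla^2 f(\x_j)$ and $B_j=P_jH_j$. Since $P_j$ commutes with $H_j$, the product $P_jH_j$ is already symmetric, so the $\mathrm{sym}$ in \Cref{algorithm 1.1} does nothing. By \Cref{lem:positive}, $B_j\succeq\mu\I$ and $\|B_j\|_2=\|H_j\|_2\le L$. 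Hence the subproblem has a unique minimizer, and by \Cref{prop:cubic-opt} it satisfies $(B_j+\lambda_j\I)\s_j=-\mathbf{r}_j$ with $\lambda_j=\sigma_j\|\s_j\|_2$. Because $B_j+\lambda_j\I\succeq\mu\I$, we get $\|\s_j\|_2\le\|\mathbf{r}_j\|_2/\mu=\|\nabla f(\x_j)\|_2/\mu$ ($P_j$ is orthogonal). Moreover $\|\nabla f(\x_j)\|_2\le L\|e_j\|_2$, by the mean value theorem along the segment $[\x^*,\x_j]\subset U$ using $\|\nabla^2 f\|_2\le L$ there. So $\|\s_j\|_2\le (L/\mu)\|e_j\|_2$.

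Next we derive the error equation. Multiply the optimality condition by $P_j$; since $P_j^2=\I$ and $P_jB_j=H_j$, this gives $(H_j+\lambda_jP_j)\s_j=-\nabla f(\x_j)$. Adding $(H_j+\lambda_j P_j)e_j$ to both sides yields
\begin{equation*}
(H_j+\lambda_jP_j)e_{j+1}=H_je_j-\nabla f(\x_j)+\lambda_jP_je_j .
\end{equation*}
The matrix $H_j+\lambda_jP_j=P_j(B_j+\lambda_j\I)$ has inverse of norm at most $1/\mu$, because $P_j$ is orthogonal and $B_j+\lambda_j\I\succeq\mu\I$. On the right-hand side, Taylor's theorem with the Lipschitz Hessian gives $\|\nabla f(\x_j)-\nabla f(\x^*)-H_je_j\|_2\le\frac M2\|e_j\|_2^2$. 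The other term is bounded by $\lambda_j\|e_j\|_2\le\sigma_{max}\|\s_j\|_2\|e_j\|_2\le\sigma_{max}(L/\mu)\|e_j\|_2^2$. Together,
\begin{equation*}
\|e_{j+1}\|_2\le\frac1\mu\Bigl(\frac M2+\frac{\sigma_{max}L}{\mu}\Bigr)\|e_j\|_2^2\le C\|e_j\|_2^2,
\end{equation*}
where the last inequality uses $L\ge\mu$: we have $\frac{M}{2\mu}\le\frac{2L^2M}{2\mu^3}$ and $\frac{\sigma_{max}L}{\mu^2}\le\frac{2L^2\sigma_{max}}{\mu^3}$. So the stated $C$ is a generous bound.

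The step needing care, and what we expect to be the main obstacle, is ensuring that all points used stay inside $U$, so that Assumption~\ref{assumption:local} (the spectral gap and the Lipschitz bound) applies at $\x_j$, at $\x_{j+1}$, and along the segments. Since $\|e_j\|_2\le r\le\mu r_0/(\mu+L)<r_0$, the segment $[\x^*,\x_j]$ lies in $U$, which is all the Taylor and mean-value estimates need. For $\x_{j+1}$ we argue directly rather than through the quadratic bound: $\|e_{j+1}\|_2\le\|e_j\|_2+\|\s_j\|_2\le(1+L/\mu)r\le r_0$, so $\x_{j+1}\in U$ by the first entry of \eqref{eq:quadratic-radius}. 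The second entry $\mu^2/(M(\mu+L))$ is not needed in this argument. It could serve to guarantee that the eigenvalue gap persists along the segment $[\x_j,\x_{j+1}]$ via Weyl's inequality, since $M\|\s_j\|_2$ is then small relative to $\mu$. That would be relevant only if one preferred to expand around $\x_j$ instead of $\x^*$. Closing the induction with $\|e_{j+1}\|_2\le Cr^2\le r$ completes the proof.
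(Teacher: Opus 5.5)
Your proof is correct, but it takes a different route from the paper's.

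\textbf{The paper's route.} The paper works with gradient residuals. It expands $\nabla f$ around $\x_j$ to get $\|\nabla f(\x_{j+1})\|_2\le(\sigma_{max}+\frac M2)\|\s_j\|_2^2$. This needs the segment $[\x_j,\x_{j+1}]\subset U$, which is where the first radius entry is used. It then converts back to a distance through the local error bound $\|\y-\x^*\|_2\le\frac{2}{\mu}\|\nabla f(\y)\|_2$, valid for $\|\y-\x^*\|_2\le\mu/M$. That error bound is what the second entry $\mu^2/(M(\mu+L))$ is for, not a Weyl-type persistence of the spectral gap as you guessed. Chaining with $\|\s_j\|_2\le\|\nabla f(\x_j)\|_2/\mu\le (L/\mu)\|\x_j-\x^*\|_2$ produces the factor $2L^2/\mu^3$ in $C$.

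\textbf{Your route.} You run the classical Newton error analysis. You recast the step as the perturbed Newton system $(H_j+\lambda_jP_j)\s_j=-\nabla f(\x_j)$, note that $H_j+\lambda_jP_j=P_j(B_j+\lambda_j\I)$ has an inverse of norm at most $1/\mu$, and expand around $\x^*$ instead of $\x_j$. This needs only $\x_j\in U$ and the segment $[\x^*,\x_j]$. It gives the sharper constant $\frac{M}{2\mu}+\frac{\sigma_{max}L}{\mu^2}$. It also shows that the second radius condition, and the factor $\mu/(\mu+L)$ in the first, are unnecessary. Your separate check $\|\x_{j+1}-\x^*\|_2\le(1+L/\mu)r\le r_0$ is likewise redundant, since your quadratic bound already gives $\|\x_{j+1}-\x^*\|_2\le r$.

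\textbf{What each buys.} Your route gives a sharper constant and fewer conditions on $r$. The paper's route gives a residual-level statement, $\|\nabla f(\x_{j+1})\|_2\le\mu^{-2}(\sigma_{max}+\frac M2)\|\nabla f(\x_j)\|_2^2$, which is the quantity monitored in the experiments. It also provides a template that carries over directly to the inexact theorem. There the extra errors $\eta_j\|\nabla f(\x_j)\|_2$ and $2L\delta_j\|\s_j\|_2$ simply add to the gradient bound before the same error-bound conversion.
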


\begin{proof}
In the neighborhood from Lemma~\ref{lem:positive}, the subproblem is
strongly convex and its unique minimizer satisfies
\begin{equation}
   \left(\I-\sum_{i=1}^k2\vvec_i\vvec_i^\top\right)\nabla f(\x_j)
   +\left(\I-\sum_{i=1}^k2\vvec_i\vvec_i^\top\right)
   \nabla^2f(\x_j)\s_j
   +\sigma_j\|\s_j\|_2\s_j=0,
   \label{eq:model-stationarity}
\end{equation}
by \Cref{prop:cubic-opt}. Taking the inner product with $\s_j$, using
\[
\left(\I-\sum_{i=1}^k2\vvec_i\vvec_i^\top\right)
\nabla^2f(\x_j)\succeq\mu\I,
\quad
\left\|
\left(\I-\sum_{i=1}^k2\vvec_i\vvec_i^\top\right)
\nabla f(\x_j)
\right\|_2
=
\|\nabla f(\x_j)\|_2,
\]
gives
\[
\mu\|\s_j\|_2^2+\sigma_j\|\s_j\|_2^3
   \leq
   \|\nabla f(\x_j)\|_2\|\s_j\|_2,
\]
and therefore
\begin{equation}
   \|\s_j\|_2
   \leq
   \frac{\|\nabla f(\x_j)\|_2}{\mu}.
   \label{eq:step-bound}
\end{equation}

For every $\x\in U$, since $\nabla f(\x^*)=0$ and the line segment
between $\x^*$ and $\x$ is contained in $U$, Assumption~\ref{assumption:local}
gives
\begin{equation}
   \|\nabla f(\x)\|_2
   =
   \left\|
   \int_0^1
   \nabla^2f\big(\x^*+t(\x-\x^*)\big)(\x-\x^*)\,dt
   \right\|_2  \leq
   L\|\x-\x^*\|_2.
\label{eq:gradient-upper-bound}
\end{equation}
Hence, whenever $\|\x_j-\x^*\|_2\leq r$,
\begin{equation}
   \|\s_j\|_2
   \leq
   \frac{L}{\mu}\|\x_j-\x^*\|_2
   \leq
   \frac{L}{\mu}r.
   \label{eq:step-error-bound}
\end{equation}
Consequently, for every $t\in[0,1]$,
\begin{equation}
   \|\x_j+t\s_j-\x^*\|_2
   \leq
   \|\x_j-\x^*\|_2+t\|\s_j\|_2  \leq
   \left(1+\frac{L}{\mu}\right)r.
\label{eq:segment-bound}
\end{equation}
By \eqref{eq:quadratic-radius}, $\left(1+\frac{L}{\mu}\right)r
   \leq
   \min\left\{r_0,\frac{\mu}{M}\right\}$, thus, the whole line segment
$\{\x_j+t\s_j:t\in[0,1]\}$ is contained in $U$, and
\begin{equation}
   \|\x_j+\s_j-\x^*\|_2\leq\frac{\mu}{M}.
   \label{eq:new-point-radius}
\end{equation}
Multiplying \eqref{eq:model-stationarity} by
$\I-\sum_{i=1}^k2\vvec_i\vvec_i^\top$ and using $\left(\I-\sum_{i=1}^k2\vvec_i\vvec_i^\top\right)^2=\I$ yields
\begin{equation}
   \nabla f(\x_j)+\nabla^2 f(\x_j)\s_j
   =
   -\sigma_j\|\s_j\|_2
   \left(\I-\sum_{i=1}^k2\vvec_i\vvec_i^\top\right)\s_j.
   \label{eq:inexact-newton}
\end{equation}
The Hessian Lipschitz condition gives
\begin{equation}
   \|\nabla f(\x_j+\s_j)-\nabla f(\x_j)
   -\nabla^2f(\x_j)\s_j\|_2
   \leq
   \frac{M}{2}\|\s_j\|_2^2.
   \label{eq:taylor-gradient-remainder}
\end{equation}
Combining \eqref{eq:taylor-gradient-remainder} with
\eqref{eq:inexact-newton} and using the orthogonality of
$\I-\sum_{i=1}^k2\vvec_i\vvec_i^\top$, we obtain
\begin{equation}
   \|\nabla f(\x_j+\s_j)\|_2
   \leq
   \left(\sigma_j+\frac{M}{2}\right)\|\s_j\|_2^2
   \leq
   \left(\sigma_{max}+\frac{M}{2}\right)\|\s_j\|_2^2.
   \label{eq:new-gradient-bound}
\end{equation}

We next establish an explicit local error bound. For any $\y\in U$,
the Hessian Lipschitz condition and $\nabla f(\x^*)=0$ give
$%
   \nabla f(\y)
   =
   \nabla^2f(\x^*)(\y-\x^*)+\mathbf{R}(\y)$,
where
\[
   \|\mathbf{R}(\y)\|_2
   \leq
   \frac{M}{2}\|\y-\x^*\|_2^2.
\]
Since every eigenvalue of $\nabla^2f(\x^*)$ has absolute value at least
$\mu$,
\[
   \|\nabla^2f(\x^*)(\y-\x^*)\|_2
   \geq
   \mu\|\y-\x^*\|_2.
\]
Therefore, if
$\|\y-\x^*\|_2\leq\mu/M$, then
$$
   \|\nabla f(\y)\|_2
   \geq
   \mu\|\y-\x^*\|_2
   -\frac{M}{2}\|\y-\x^*\|_2^2  
   \geq
   \frac{\mu}{2}\|\y-\x^*\|_2,
$$
and hence
\begin{equation}
   \|\y-\x^*\|_2
   \leq
   \frac{2}{\mu}\|\nabla f(\y)\|_2.
   \label{eq:explicit-error-bound}
\end{equation}

Applying \eqref{eq:explicit-error-bound} to
$\y=\x_j+\s_j$, which is justified by
\eqref{eq:new-point-radius}, and using
\eqref{eq:new-gradient-bound},
\eqref{eq:step-bound}, and
\eqref{eq:gradient-upper-bound}, gives
\begin{equation}
\begin{aligned}
   \|\x_{j+1}-\x^*\|_2&=
   \|\x_j+\s_j-\x^*\|_2 \leq
   \frac{2}{\mu}
   \|\nabla f(\x_j+\s_j)\|_2  \\
   &\leq
   \frac{2}{\mu}
   \left(\sigma_{max}+\frac{M}{2}\right)
   \|\s_j\|_2^2 \leq
   \frac{2}{\mu}
   \left(\sigma_{max}+\frac{M}{2}\right)
   \frac{\|\nabla f(\x_j)\|_2^2}{\mu^2}\\
   &\leq
   \frac{2L^2\left(\sigma_{max}+\frac{M}{2}\right)}{\mu^3}
   \|\x_j-\x^*\|_2^2 =C\|\x_j-\x^*\|_2^2.
\end{aligned}
\label{eq:quadratic-estimate}
\end{equation}
Moreover, by \eqref{eq:quadratic-radius}, $Cr\leq1$. Hence, if $\|\x_j-\x^*\|_2\leq r$, then
\[
   \|\x_{j+1}-\x^*\|_2
   \leq
   C\|\x_j-\x^*\|_2^2
   \leq
   Cr\|\x_j-\x^*\|_2
   \leq
   \|\x_j-\x^*\|_2
   \leq r.
\]
Since $\|\x_0-\x^*\|_2\leq r$, induction gives
$\|\x_j-\x^*\|_2\leq r$ for every $j\in\mathbb{N}$, and
\eqref{eq:quadratic-rate} follows.
\end{proof}

The following proposition shows that the cubic regularization method in \Cref{algorithm 1.1} is tailored specifically to finding index-$k$ critical points.
\begin{proposition}
\label{thm:stability}
Fix $k\in\{0,\ldots,d\}$ in \Cref{algorithm 1.1}. Suppose all the critical points of $f$ are non-degenerate; then, $\bar \x$ is a fixed point of \Cref{algorithm 1.1} if and only if $\bar \x$ is an index-$k$ critical point of $f$, where $\bar \x$ is called a fixed point if the zero step is a global minimizer of the cubic subproblem.
\end{proposition}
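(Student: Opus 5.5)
We apply \Cref{prop:cubic-opt} with $\s=0$. The zero step is a global minimizer of the cubic subproblem at $\bar\x$ if and only if there is $\lambda$ with $(B+\lambda\I)\cdot 0=-\mathbf{r}$, $B+\lambda\I\succeq0$ and $\lambda=\sigma\|0\|_2=0$. Here $\mathbf{r}=(\I-2\sum_{i=1}^k\vvec_i\vvec_i^\top)\nabla f(\bar\x)$ and $B=\mathrm{sym}((\I-2\sum_{i=1}^k\vvec_i\vvec_i^\top)\nabla^2f(\bar\x))$. So the fixed-point condition is equivalent to the pair
\[
\mathbf{r}=0\quad\text{and}\quad B\succeq0 .
\]
The proof then reduces to showing that this pair holds exactly when $\bar\x$ is an index-$k$ critical point.

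\emph{First condition.} The reflector $P=\I-2\sum_{i=1}^k\vvec_i\vvec_i^\top$ is orthogonal and involutive ($P^2=\I$), since the $\vvec_i$ are orthonormal. Hence $\mathbf{r}=0$ if and only if $\nabla f(\bar\x)=0$, i.e. $\bar\x$ is a critical point.

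\emph{Second condition.} Let $\vvec_1,\dots,\vvec_d$ be an orthonormal eigenbasis of $\nabla^2f(\bar\x)$ with eigenvalues $\lambda_1\le\cdots\le\lambda_d$, where the first $k$ are the vectors returned by the Eigensolver. Then $P$ and $\nabla^2 f(\bar\x)$ commute, so $P\nabla^2f(\bar\x)$ is already symmetric and the $\mathrm{sym}$ does nothing. Its eigenvalues are $-\lambda_1,\dots,-\lambda_k,\lambda_{k+1},\dots,\lambda_d$, as in the proof of \Cref{lem:positive}. Thus $B\succeq0$ if and only if $\lambda_1,\dots,\lambda_k\le0$ and $\lambda_{k+1},\dots,\lambda_d\ge0$.

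\emph{Combining.} At a critical point, non-degeneracy gives $\lambda_i\ne0$ for all $i$. The sign conditions then become $\lambda_k<0<\lambda_{k+1}$, so exactly $k$ eigenvalues are negative, i.e. $\bar\x$ is index-$k$. Conversely, if $\bar\x$ is an index-$k$ critical point, then $\nabla f(\bar\x)=0$ and the $k$ smallest eigenvalues are exactly the negative ones. So $B\succ0$, and $\s=0$ satisfies \eqref{eq:cubic-opt} with $\lambda=0$; it is in fact the unique minimizer by strong convexity. The edge cases $k=0$ (no reflection) and $k=d$ (full reflection, $P=-\I$) are covered by the same argument with the empty or full index set.

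\emph{Main obstacle.} The only delicate point is the well-definedness of the eigenvector choice, which enters the second condition. When $\lambda_k=\lambda_{k+1}$, the Eigensolver output is not unique. In that case I will argue that for any admissible choice, $P$ still commutes with $\nabla^2 f(\bar\x)$, because the chosen vectors span an invariant subspace. The spectrum of $B$ is therefore still $\{-\lambda_1,\dots,-\lambda_k,\lambda_{k+1},\dots,\lambda_d\}$. At a critical point such a tie would force the sign conditions to give $\lambda_k=\lambda_{k+1}=0$, which non-degeneracy excludes. The characterization is therefore independent of the choice.
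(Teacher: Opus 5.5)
Your proof is correct and follows essentially the same route as the paper: the reflected gradient vanishes iff $\nabla f(\bar\x)=\mathbf{0}$, and the spectrum $\{-\lambda_1,\ldots,-\lambda_k,\lambda_{k+1},\ldots,\lambda_d\}$ of the reflected Hessian determines the index. The only difference is that you obtain $B\succeq 0$ directly from \Cref{prop:cubic-opt} with $\lambda=\sigma\|\mathbf{0}\|_2=0$, which handles $\bar k>k$ and $\bar k<k$ uniformly, whereas the paper uses only the first-order part of the lemma and then exhibits a negative-curvature direction with $q(t\vvec)<q(\mathbf{0})$ for small $|t|$.
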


\begin{proof}
At an index-$k$ critical point $\bar \x$, $\nabla f(\bar \x)=\mathbf{0}$. The linear coefficient of the subproblem therefore vanishes. Let
\[
B=\left(\I-\sum_{i=1}^k2\vvec_i\vvec_i^\top\right)\nabla^2f(\bar \x).
\]
Since $\bar \x$ is an index-$k$ critical point and is non-degenerate, the eigenvalues of $\nabla^2f(\bar \x)$ satisfy
\[
\lambda_1\leq\cdots\leq\lambda_k<0<\lambda_{k+1}\leq\cdots\leq\lambda_d.
\]
Therefore, the eigenvalues of $B$ are
\[
-\lambda_1,\ldots,-\lambda_k,\lambda_{k+1},\ldots,\lambda_d,
\]
which are all positive. Hence, $B$ is positive-definite. Therefore, the subproblem
has the unique global minimizer $\s=\mathbf{0}$, which means $\bar \x$ is a fixed point of \Cref{algorithm 1.1}.

Conversely, suppose $\bar \x$ is a fixed point of \Cref{algorithm 1.1}, i.e., $\s=\mathbf{0}$ is a global minimizer of the subproblem. The first-order optimality condition of the subproblem in \Cref{prop:cubic-opt} at $\s=\mathbf{0}$ gives
\[
\left(\I-\sum_{i=1}^k2\vvec_i\vvec_i^\top\right)\nabla f(\bar \x)=\mathbf{0},
\]
and therefore $\nabla f(\bar \x)=\mathbf{0}$. Hence, $\bar \x$ is a critical point of $f$. Let $\bar \x$ be an index-$\bar k$ critical point. Suppose $\bar k\neq k$. If $\bar k>k$, at least one negative eigenvalue of $\nabla^2 f(\bar \x)$ lies outside the reflected subspace. More precisely, $\lambda_{k+1}<0$, and the corresponding eigenvector $\vvec_{k+1}$ satisfies $B\vvec_{k+1}
=
\lambda_{k+1}\vvec_{k+1}$. If $\bar k<k$, then $\lambda_{\bar k+1}>0$,
and the corresponding eigenvector $\vvec_{\bar k+1}$ lies inside the reflected subspace and satisfies $B\vvec_{\bar k+1}
=
-\lambda_{\bar k+1}\vvec_{\bar k+1}$.
In either case,
$\left(\I-\sum_{i=1}^k2\vvec_i\vvec_i^\top\right)\nabla^2f(\bar \x)$
has an eigenpair $(\beta,\vvec)$ with
$\beta<0$ and $\|\vvec\|_2=1$. Since $\nabla f(\bar \x)=\mathbf{0}$, along this direction, we have that 
\[
   q(t\vvec)
   =f(\bar \x)+\frac{\beta}{2}t^2+\frac{\sigma}{3}|t|^3,
\]
where $q$ is the cubic-regularized quadratic function defined in \Cref{prop:cubic-opt}.
For every $0<|t|<-\frac{3\beta}{2\sigma}$, we have $\frac{\beta}{2}t^2+\frac{\sigma}{3}|t|^3<0$,
and hence $q(t\vvec)<f(\bar \x)=q(\mathbf{0})$. 
Therefore, $\s=\mathbf{0}$ is not a global minimizer of the subproblem, which contradicts that $\bar \x$ is a fixed point of \Cref{algorithm 1.1}. Hence, $\bar k=k$, and $\bar \x$ is an index-$k$ critical point of $f$.
\end{proof}

\section{Inexact invariant subspaces and inexact solutions of subproblem}
\label{sec:inexact}

In the actual implementation of \Cref{algorithm 1.1}, $\{\vvec_i\}_{i=1}^k=\mathrm{Eigensolver}(\nabla^2 f(\x_j))$ and the subproblem are usually solved by iterative eigensolvers such as LOBPCG, block Lanczos, or optimization methods for the Rayleigh quotient (also see \Cref{sec: methods for subproblem}). Consequently, we cannot expect to find either the exact unstable eigenspace or the exact solution of the subproblem due to the round-off error and tolerance in these iterative methods. Hence, in this subsection, we allow both of these problems to be solved inexactly. The
error in the unstable eigenspace is measured through spectral projectors,
rather than individual eigenvectors. This metric is invariant under changes
of sign and under orthogonal changes of basis within a multiple eigenspace.

At iteration $j$, let
\[
   H_j=\nabla^2f(\x_j),\quad
   P_j=\sum_{i=1}^k\vvec_i(\x_j)\vvec_i(\x_j)^\top,\quad
   R_j=\I-2P_j.
\]
Since $P_j$ is the spectral projector associated with the first $k$
eigenvalues of $H_j$, we have
\[
   P_jH_j=H_jP_j,
   \quad
   R_jH_j=H_jR_j.
\]
Let $\widetilde V_j\in\mathbb{R}^{d\times k}$ be the inexact solutions of $\{\vvec_i\}_{i=1}^k=\mathrm{Eigensolver}(\nabla^2 f(\x_j))$, which have orthonormal columns and
define
\begin{equation}
   \widetilde P_j=\widetilde V_j\widetilde V_j^\top,\quad
   \widetilde R_j=\I-2\widetilde P_j,\quad
   \delta_j=\|\widetilde P_j-P_j\|_2.
   \label{eq:projector-error}
\end{equation}
The quantity $\delta_j$ is the sine of the largest principal angle between
the exact and computed invariant subspaces. In contrast to $R_jH_j$, the
product $\widetilde R_jH_j$ need not be symmetric because an approximate
spectral projector need not commute with $H_j$. We therefore use its
symmetric part
\begin{equation}
   \widetilde B_j
   =
   \frac{1}{2}
   \left(
   \widetilde R_jH_j+H_j\widetilde R_j
   \right)
   \label{eq:inexact-B}
\end{equation}
in the inexact cubic model
\begin{equation}
   \widetilde q_j(\s)
   =
   f(\x_j)
   +
   \left(\widetilde R_j\nabla f(\x_j)\right)^\top\s
   +
   \frac{1}{2}\s^\top\widetilde B_j\s
   +
   \frac{\sigma_j}{3}\|\s\|_2^3
   \label{eq:inexact-model}
\end{equation}
as in \Cref{algorithm 1.1}. Notice that this symmetrization does not alter the quadratic form since $\s^\top\widetilde R_jH_j\s  =   \s^\top\widetilde B_j\s$.

\begin{assumption}[Inexact computations of the subproblem]
\label{assumption:inexact}
The computed step $\s_j$ satisfies
\begin{equation}
   \boldsymbol{\xi}_j
   =
   \widetilde R_j\nabla f(\x_j)
   +
   \widetilde B_j\s_j
   +
   \sigma_j\|\s_j\|_2\s_j,
   \quad
   \|\boldsymbol{\xi}_j\|_2
   \leq
   \eta_j\|\nabla f(\x_j)\|_2,
   \label{eq:forcing}
\end{equation}
where $\eta_j\geq0$ is a forcing parameter. The approximate invariant
subspace satisfies \eqref{eq:projector-error}. Both $\delta_j$ and $\eta_j$
can be strictly positive but small.
\end{assumption}

The residual condition \eqref{eq:forcing} is a relative accuracy condition
for the inexact cubic step. It permits a nonzero inner residual at every
finite iteration, while forcing its absolute size to decrease as the target
critical point is approached.

\begin{theorem}
\label{thm:inexact-recursion}
Suppose Assumption~\ref{assumption:local} and Assumption~\ref{assumption:inexact} hold. Fix $\bar\eta\geq0$ and $\bar\delta\geq0$ such that $\bar\delta\leq\frac{\mu}{4L}$, and define $K=\frac{2(1+\bar\eta)}{\mu}, \rho=\min\left\{r_0,\frac{\mu}{M}\right\}, r=\frac{\rho}{1+KL}$. Furthermore, define $C_1
   =
   \frac{2L}{\mu}
   \max\left\{1,2LK\right\}, C_2
   =
   \frac{2}{\mu}
   \left(\sigma_{max}+\frac{M}{2}\right)K^2L^2$.
If
$$
   \delta_j\leq\bar\delta,\quad
   \eta_j\leq\bar\eta,\quad
   \|\x_j-\x^*\|_2\leq r,
$$
then the whole line segment
$\{\x_j+t\s_j:t\in[0,1]\}$ is contained in $U$, the approximate reflected
Hessian satisfies $\widetilde B_j\succeq\frac{\mu}{2}\I$,
and
\begin{equation}
   \|\x_{j+1}-\x^*\|_2
   \leq
   C_1(\delta_j+\eta_j)\|\x_j-\x^*\|_2
   +
   C_2\|\x_j-\x^*\|_2^2.
   \label{eq:inexact-recursion}
\end{equation}
\end{theorem}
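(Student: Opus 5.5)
The proof will mirror that of \Cref{thm:quadratic}, perturbing each step. First, the positivity of $\widetilde B_j$. Write $\widetilde R_j=R_j-2E_j$ with $E_j=\widetilde P_j-P_j$, so $\|E_j\|_2=\delta_j$. Since $R_j$ commutes with $H_j$, we have $\widetilde B_j=R_jH_j-(E_jH_j+H_jE_j)$. By \Cref{lem:positive}, $R_jH_j\succeq\mu\I$, and the perturbation has norm at most $2L\delta_j\leq 2L\bar\delta\leq\mu/2$. Hence $\widetilde B_j\succeq\frac{\mu}{2}\I$.

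Next, bound the step. Take the inner product of \eqref{eq:forcing} with $\s_j$ and use $\|\widetilde R_j\|_2=1$ ($\widetilde R_j$ is an orthogonal reflection because $\widetilde V_j$ has orthonormal columns). This gives $\frac{\mu}{2}\|\s_j\|^2\leq(1+\eta_j)\|\nabla f(\x_j)\|\|\s_j\|$, so $\|\s_j\|\leq K\|\nabla f(\x_j)\|\leq KL\|\x_j-\x^*\|$ by \eqref{eq:gradient-upper-bound}. Then $\|\x_j+t\s_j-\x^*\|\leq(1+KL)r=\rho$, so the segment lies in $U$ and $\x_{j+1}$ satisfies $\|\x_{j+1}-\x^*\|\leq\mu/M$. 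This is what \eqref{eq:explicit-error-bound} requires.

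The core step is the Newton-residual identity, the analogue of \eqref{eq:inexact-newton}. Multiply \eqref{eq:forcing} by $\widetilde R_j$ and use $\widetilde R_j^2=\I$:
\[
\nabla f(\x_j)+\widetilde R_j\widetilde B_j\s_j=\widetilde R_j\boldsymbol{\xi}_j-\sigma_j\|\s_j\|\widetilde R_j\s_j .
\]
Then express $\widetilde R_j\widetilde B_j=H_j+\Delta_j$. Indeed, $\widetilde R_j\widetilde B_j=\frac12(H_j+\widetilde R_jH_j\widetilde R_j)$, and $\widetilde R_jH_j\widetilde R_j-H_j=\widetilde R_jH_j\widetilde R_j-R_jH_jR_j$. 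This difference equals $-2E_jH_j\widetilde R_j-2R_jH_jE_j$, so $\|\Delta_j\|_2\leq 2L\delta_j$. Combining with \eqref{eq:taylor-gradient-remainder} gives
\[
\|\nabla f(\x_{j+1})\|\leq 2L\delta_j\|\s_j\|+\eta_j\|\nabla f(\x_j)\|+\left(\sigma_{max}+\tfrac M2\right)\|\s_j\|^2.
\]

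Finally, apply \eqref{eq:explicit-error-bound} and substitute $\|\s_j\|\leq KL\|\x_j-\x^*\|$ and $\|\nabla f(\x_j)\|\leq L\|\x_j-\x^*\|$. The result is
\[
\|\x_{j+1}-\x^*\|\leq\frac{2}{\mu}\left(2L^2K\delta_j+L\eta_j\right)\|\x_j-\x^*\|+\frac{2}{\mu}\left(\sigma_{max}+\tfrac M2\right)K^2L^2\|\x_j-\x^*\|^2 .
\]
The coefficient of the linear term is at most $\frac{2L}{\mu}\max\{1,2LK\}(\delta_j+\eta_j)=C_1(\delta_j+\eta_j)$, which yields \eqref{eq:inexact-recursion}. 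I expect the main obstacle to be the algebra for $\Delta_j$: $\widetilde P_j$ does not commute with $H_j$, so the symmetrization must be handled carefully to obtain a bound linear in $\delta_j$ with constant $2L$. If that constant comes out larger, say $3L$ from a cruder splitting, I would refine the decomposition so that it still fits inside $C_1$.
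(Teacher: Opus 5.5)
Your proposal is correct and follows the paper's proof essentially step for step: the bound $\|\widetilde B_j-R_jH_j\|_2\le 2L\delta_j$, the step bound $\|\s_j\|_2\le K\|\nabla f(\x_j)\|_2$, the identity obtained by multiplying the residual equation by $\widetilde R_j$, and the final substitution into the local error bound all match. The only cosmetic difference is how you bound $\widetilde R_jH_j\widetilde R_j-H_j$: you telescope against $R_jH_jR_j$, whereas the paper right-multiplies by $\widetilde R_j$ to get the commutator $H_j(\widetilde R_j-R_j)-(\widetilde R_j-R_j)H_j$. Both give the constant $2L$, so the fallback you mention for a larger constant is not needed.
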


\begin{proof}
Since $\|\widetilde R_j-R_j\|_2
   =
   2\|\widetilde P_j-P_j\|_2
   =
   2\delta_j$ and $R_jH_j=H_jR_j$, \eqref{eq:inexact-B} gives
\begin{equation}
   \|\widetilde B_j-R_jH_j\|_2=
   \left\|
   \frac{1}{2}
   \left(
   (\widetilde R_j-R_j)H_j
   +
   H_j(\widetilde R_j-R_j)
   \right)
   \right\|_2 \leq
   2L\delta_j.
\label{eq:B-perturbation}
\end{equation}
By Lemma~\ref{lem:positive}, $R_jH_j\succeq\mu\I$. Hence, for every $\z\in\mathbb{R}^d$,
$$
   \z^\top\widetilde B_j\z
   =
   \z^\top R_jH_j\z
   +
   \z^\top(\widetilde B_j-R_jH_j)\z \geq
   \left(\mu-2L\delta_j\right)\|\z\|_2^2.
$$
Using $\delta_j\leq\bar\delta$ and $\bar\delta\leq\frac{\mu}{4L}$, we obtain $\mu-2L\delta_j
   \geq
   \mu-2L\bar\delta
   \geq
   \frac{\mu}{2}$,
which proves $\widetilde B_j\succeq\frac{\mu}{2}\I$. Therefore, the approximate
cubic model \eqref{eq:inexact-model} is strongly convex and has a unique
global minimizer.

Taking the inner product of \eqref{eq:forcing} with $\s$, using
$\widetilde B_j\succeq\frac{\mu}{2}\I$, $\|\widetilde R_j\|_2=1$, and
$\|\boldsymbol{\xi}_j\|_2\leq\eta_j\|\nabla f(\x_j)\|_2$, we get
\[
   \frac{\mu}{2}\|\s_j\|_2^2
   +
   \sigma_j\|\s_j\|_2^3
   \leq
   (1+\eta_j)\|\nabla f(\x_j)\|_2\|\s_j\|_2.
\]
This leads to
\begin{equation}
   \|\s_j\|_2
   \leq
   K\|\nabla f(\x_j)\|_2,
   \quad
   K=\frac{2(1+\bar\eta)}{\mu}.
   \label{eq:inexact-step-bound}
\end{equation}
Since $\nabla f(\x^*)=\mathbf{0}$ and $\x_j\in U$, Assumption~\ref{assumption:local}
gives
\begin{equation}
   \|\nabla f(\x_j)\|_2
   \leq
   L\|\x_j-\x^*\|_2.
   \label{eq:inexact-gradient-upper-bound}
\end{equation}
Therefore, $\|\s_j\|_2
   \leq
   KL\|\x_j-\x^*\|_2$, and for any $t\in[0,1]$, we have that
\begin{equation}
   \|\x_j+t\s_j-\x^*\|_2\leq
   \|\x_j-\x^*\|_2
   +
   t\|\s_j\|_2\leq
   (1+KL)r \leq r_0,
\label{eq:inexact-segment-bound}
\end{equation}
i.e., the whole line segment
$\{\x_j+t\s_j:t\in[0,1]\}$ is contained in $U$. Furthermore,
\begin{equation}
   \|\x_j+\s_j-\x^*\|_2
   \leq
   \rho
   \leq
   \frac{\mu}{M}.
   \label{eq:inexact-new-point-bound}
\end{equation}
Multiplying \eqref{eq:forcing} by $\widetilde R_j$ and using
$\widetilde R_j^2=\I$, we give
\begin{equation}
   \nabla f(\x_j)+H_j\s_j
   =
   \widetilde R_j\boldsymbol{\xi}_j
   +
   \left(H_j-\widetilde R_j\widetilde B_j\right)\s_j
   -
   \sigma_j\|\s_j\|_2\widetilde R_j\s_j.
   \label{eq:inexact-newton-identity}
\end{equation}
From \eqref{eq:inexact-B}, $\widetilde R_j\widetilde B_j
   =
   \frac{1}{2}
   \left(
   H_j+\widetilde R_j H_j\widetilde R_j
   \right)$. Therefore,
\begin{equation}
\begin{aligned}
   \|H_j-\widetilde R_j\widetilde B_j\|_2
   &=
   \frac{1}{2}
   \|H_j-\widetilde R_j H_j\widetilde R_j\|_2 =
   \frac{1}{2}
   \|H_j\widetilde R_j-\widetilde R_j H_j\|_2 \\
   &=
   \frac{1}{2}
   \|H_j(\widetilde R_j-R_j)-(\widetilde R_j-R_j)H_j\|_2 \leq
   2L\delta_j.
\end{aligned}
\label{eq:commutator-bound}
\end{equation}
The Hessian Lipschitz condition and \eqref{eq:inexact-newton-identity} give
\begin{equation}
\begin{aligned}
   \|\nabla f(\x_j+\s_j)\|_2
   &\leq
   \|\boldsymbol{\xi}_j\|_2
   +
   \|H_j-\widetilde R_j\widetilde B_j\|_2\|\s_j\|_2 
   +
   \sigma_j\|\s_j\|_2^2
   +
   \frac{M}{2}\|\s_j\|_2^2 \\
   &\leq
   \eta_j\|\nabla f(\x_j)\|_2
   +
   2L\delta_j\|\s_j\|_2
   +
   \left(\sigma_{max}+\frac{M}{2}\right)\|\s_j\|_2^2.
\end{aligned}
\label{eq:inexact-gradient-bound}
\end{equation}
Similar to the argument leading to \eqref{eq:quadratic-estimate}, by
\eqref{eq:inexact-gradient-bound},
\eqref{eq:inexact-step-bound}, and
\eqref{eq:inexact-gradient-upper-bound}, we obtain
\begin{equation}
\begin{aligned}
   \|\x_{j+1}-\x^*\|_2
   &=
   \|\x_j+\s_j-\x^*\|_2 \leq
   \frac{2}{\mu}
   \|\nabla f(\x_j+\s_j)\|_2 \\
   &\leq
   \frac{2L}{\mu}\eta_j
   \|\x_j-\x^*\|_2
   +
   \frac{4L^2K}{\mu}\delta_j
   \|\x_j-\x^*\|_2 \\
   &\quad
   +
   \frac{2}{\mu}
   \left(\sigma_{max}+\frac{M}{2}\right)
   K^2L^2
   \|\x_j-\x^*\|_2^2 \\
   &\leq
   C_1(\delta_j+\eta_j)
   \|\x_j-\x^*\|_2
   +
   C_2\|\x_j-\x^*\|_2^2,
\end{aligned}
\end{equation}
which proves \eqref{eq:inexact-recursion}.
\end{proof}

\begin{corollary}
\label{cor:inexact-rates}
Assume the same conditions as in Theorem~\ref{thm:inexact-recursion}, and
   \[
      \delta_j+\eta_j\leq\varepsilon,
      \quad \forall j\in\mathbb{N},
      \quad
      C_1\varepsilon<1.
   \]
   Define $
      r_{\rm lin}
      =
      \min\left\{
      r,
      \frac{1-C_1\varepsilon}{2C_2}
      \right\}, q =
      C_1\varepsilon+C_2r_{\rm lin}$. If $\|\x_0-\x^*\|_2\leq r_{\rm lin}$, then
   \[
      \|\x_{j+1}-\x^*\|_2\leq q \|\x_j-\x^*\|_2,
      \quad
      q\leq ({1+C_1\varepsilon})/{2}<1, \  \forall j\in \mathbb{N},
   \]
   and hence $\x_j$ converges linearly to $\x^*$.
 Moreover, if $\delta_j+\eta_j\rightarrow0$, then $\x_j$ converges superlinearly to $\x^*$.
 If, in addition, for all sufficiently large $j$, $\delta_j+\eta_j\leq c \|\x_j-\x^*\|_2$ for some $c>0$, then $\x_j$ converges quadratically to $\x^*$. In particular, if for large $j$, we implement the choices 
 \begin{equation}
   \label{eq:quadratic-forcing}
 \delta_j = O\left(\|\nabla f(\x_j)\|_2\right) \;\text{ and }\; \|\boldsymbol{\xi}_j\|_2 = O\left(\|\nabla f(\x_j)\|_2^2\right)\; (\text{or }\ \eta_j = O\left(\|\nabla f(\x_j)\|_2\right)), 
 \end{equation}
then the  quadratic convergence holds.
\end{corollary}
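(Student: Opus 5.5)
The whole corollary follows from the one-step recursion \eqref{eq:inexact-recursion} of Theorem~\ref{thm:inexact-recursion}, combined with an induction that keeps the iterates inside the ball of radius $r$.

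\textbf{Linear rate.} I would prove by induction that $\|\x_j-\x^*\|_2\leq r_{\rm lin}$ for all $j$. The case $j=0$ is the hypothesis. Assume $\|\x_j-\x^*\|_2\leq r_{\rm lin}\leq r$. Since $\delta_j\leq\bar\delta$ and $\eta_j\leq\bar\eta$ are part of ``the same conditions'', Theorem~\ref{thm:inexact-recursion} applies. Using $\delta_j+\eta_j\leq\varepsilon$ and $\|\x_j-\x^*\|_2\leq r_{\rm lin}$ in \eqref{eq:inexact-recursion} gives
\[
\|\x_{j+1}-\x^*\|_2\leq \bigl(C_1\varepsilon+C_2\|\x_j-\x^*\|_2\bigr)\|\x_j-\x^*\|_2\leq q\|\x_j-\x^*\|_2 .
\]
From $r_{\rm lin}\leq(1-C_1\varepsilon)/(2C_2)$ we get $q\leq C_1\varepsilon+(1-C_1\varepsilon)/2=(1+C_1\varepsilon)/2<1$. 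Hence $\|\x_{j+1}-\x^*\|_2\leq r_{\rm lin}$, which closes the induction. Iterating then gives $\|\x_j-\x^*\|_2\leq q^j\|\x_0-\x^*\|_2\to0$.

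\textbf{Superlinear rate.} Let $e_j=\|\x_j-\x^*\|_2$. We know $e_j\to0$, and the iterates stay in the region where the recursion holds. Dividing \eqref{eq:inexact-recursion} by $e_j$ (when $e_j=0$ the iteration has already terminated at $\x^*$) gives
\[
e_{j+1}/e_j\leq C_1(\delta_j+\eta_j)+C_2e_j .
\]
Both terms on the right tend to $0$ when $\delta_j+\eta_j\to0$, so the convergence is superlinear.

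\textbf{Quadratic rate.} If $\delta_j+\eta_j\leq c\,e_j$ for large $j$, then \eqref{eq:inexact-recursion} becomes $e_{j+1}\leq(C_1c+C_2)e_j^2$, which is quadratic convergence.

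\textbf{The choices \eqref{eq:quadratic-forcing}.} These reduce to the previous case by a translation step, and this is the only point needing care.
\begin{itemize}
\item By \eqref{eq:inexact-gradient-upper-bound}, $\|\nabla f(\x_j)\|_2\leq Le_j$, so $\delta_j=O(\|\nabla f(\x_j)\|_2)$ gives $\delta_j=O(e_j)$.
\item If $\|\boldsymbol{\xi}_j\|_2=O(\|\nabla f(\x_j)\|_2^2)$, we may take $\eta_j=\|\boldsymbol{\xi}_j\|_2/\|\nabla f(\x_j)\|_2=O(\|\nabla f(\x_j)\|_2)=O(e_j)$. This is consistent with Assumption~\ref{assumption:inexact}, because the recursion only uses $\|\boldsymbol{\xi}_j\|_2\leq\eta_j\|\nabla f(\x_j)\|_2$.
\item For large $j$, $e_j$ is small, so these $\delta_j,\eta_j$ still satisfy the bounds $\bar\delta,\bar\eta$ and $\delta_j+\eta_j\leq\varepsilon$. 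The linear phase therefore remains valid.
\item The quadratic estimate then follows, with constant $C_1c+C_2$ where $c$ comes from the $O(\cdot)$ constants times $L$.
\end{itemize}

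\textbf{Main obstacle.} The only delicate point is making sure that the hypotheses of Theorem~\ref{thm:inexact-recursion} hold at every iterate. The induction in the linear step does exactly this: it keeps $e_j\leq r_{\rm lin}\leq r$, and the accuracy bounds hold by assumption. After that, every remaining step is direct manipulation of \eqref{eq:inexact-recursion}.
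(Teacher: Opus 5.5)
Your proposal is correct and follows the paper's proof essentially step for step. Both arguments run an induction on $\|\x_j-\x^*\|_2\le r_{\rm lin}$ via \eqref{eq:inexact-recursion} with $q\le(1+C_1\varepsilon)/2$, then divide the recursion by $\|\x_j-\x^*\|_2$ for the superlinear and quadratic cases, and finally translate \eqref{eq:quadratic-forcing} into $\delta_j+\eta_j=O(\|\x_j-\x^*\|_2)$. The only difference is cosmetic: the paper cites the two-sided equivalence $\frac{\mu}{2}\|\x_j-\x^*\|_2\le\|\nabla f(\x_j)\|_2\le L\|\x_j-\x^*\|_2$, whereas you correctly note that the upper bound alone suffices for this translation.
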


\begin{proof}
Whenever $\|\x_j-\x^*\|_2\leq r_{\rm lin}$,
Theorem~\ref{thm:inexact-recursion} gives
\begin{equation}
   \|\x_{j+1}-\x^*\|_2
   \leq
   C_1\varepsilon \|\x_j-\x^*\|_2+C_2\|\x_j-\x^*\|_2^2 \leq q\|\x_j-\x^*\|_2.
   \label{eq: linear}
\end{equation}
By $q
   \leq
   ({1+C_1\varepsilon})/{2}
   <1$, we have that $\|\x_{j+1}-\x^*\|_2 \leq \|\x_j-\x^*\|_2\leq r_{\rm lin}$. 
   
   Since $\|\x_0-\x^*\|_2\leq r_{\rm lin}$, an induction argument gives
$\|\x_{j+1}-\x^*\|_2\leq q \|\x_j-\x^*\|_2$ for every $j\in\mathbb{N}$, i.e., $\x_j$ converges $Q$-linearly to $\x^*$. 

Moreover, since $\limsup_{j\rightarrow \infty}\frac{\|\x_{j+1}-\x^*\|_2}{\|\x_j-\x^*\|_2}\leq C_1\epsilon$, a smaller $\epsilon$ that corresponds to solving both subproblems more accurately leads to a smaller upper bound on the asymptotic convergence factor, and hence, potentially faster local convergence.

If, in addition, $\delta_j+\eta_j\rightarrow0$, dividing \eqref{eq:inexact-recursion} by $\|\x_j-\x^*\|_2$ gives \[\frac{\|\x_{j+1}-\x^*\|_2}{\|\x_j-\x^*\|_2}
   \leq
   C_1(\delta_j+\eta_j)
   +
   C_2\|\x_j-\x^*\|_2.
\]
The linear convergence rate in \eqref{eq: linear} gives $\|\x_j-\x^*\|_2\rightarrow0$,  hence $\frac{\|\x_{j+1}-\x^*\|_2}{\|\x_j-\x^*\|_2}\rightarrow0$. Therefore, the convergence is superlinear. If, in addition, for all sufficiently large $j$, $\delta_j+\eta_j\leq c \|\x_j-\x^*\|_2$ for some $c>0$. We can similarly get that $\x_j$ converges quadratically to $\x^*$. Furthermore, for every $\x_j\in U$ satisfying $\|\x_j-\x^*\|_2
   \leq
   \frac{\mu}{M}$, the preceding estimates give the explicit local equivalence 
   \[\frac{\mu}{2}\|\x_j-\x^*\|_2
   \leq
   \|\nabla f(\x_j)\|_2
   \leq
   L\|\x_j-\x^*\|_2.\]
   Hence, the readily implementable conditions \eqref{eq:quadratic-forcing}
imply $\delta_j+\eta_j
   =
   O\left(\|\nabla f(\x_j)\|_2\right)
   =
   O\left(\|\x_j-\x^*\|_2\right)$,
and therefore preserve the quadratic convergence.
\end{proof}

\begin{remark}
\label{rem:practical-forcing}
Both errors in
\eqref{eq:quadratic-forcing} may be strictly positive at every finite
iteration. Fixed sufficiently small bounds on $\delta_j$ and on the relative
residual $\eta_j$ preserve local convergence, but generally at a linear
rate. A fixed nonzero absolute residual of the subproblem does not give the same convergence conclusion. If
\eqref{eq:forcing} is replaced by $\|\boldsymbol{\xi}_j\|_2
   \leq
   \varepsilon_{\rm s}$, then \eqref{eq:inexact-gradient-bound} contains the additive term
$\varepsilon_{\rm s}$, and the corresponding local estimate contains an
additive term. Therefore, in general one can guarantee convergence only to an
$O(\varepsilon_s)$ neighborhood of $\x^*$.
\end{remark}

\begin{proposition}
\label{prop:inexact-stability}
Fix $k\in\{0,\ldots,d\}$ in \Cref{algorithm 1.1}. Suppose all the critical points of $f$ are non-degenerate and Assumption~\ref{assumption:inexact} holds with $0\leq \eta_j\leq\bar\eta<1$. In addition, suppose the computed step $\s_j$ satisfies the second-order condition
\begin{equation}
   \lambda_{\min}
   \left(
   \widetilde B_j+\sigma_j\|\s_j\|_2\I
   \right)
   \geq
   -\omega_j,
   \quad
   \omega_j\geq0.
   \label{eq:inexact-second-order}
\end{equation}
Suppose that the accuracy of the approximate invariant subspace $\delta_j$ and the second-order accuracy of the subproblem $\omega_j$  satisfy
\begin{equation}
2\|\nabla^2f(\bar \x)\|_2\delta_j+\omega_j   <  \min_{1\leq i\leq d}   \left| \lambda_i\left(\nabla^2f(\bar \x)\right)   \right|:= \gamma(\bar \x),\; \forall \text{ critical point } \bar \x.
\label{eq:inexact-index-condition}
\end{equation}
Then, $\s_j=\mathbf{0}$ satisfies both \eqref{eq:forcing} and \eqref{eq:inexact-second-order} at $\x_j=\bar \x$ if and only if $\bar \x$ is an index-$k$ critical point of $f$.
\end{proposition}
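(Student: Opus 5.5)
At $\s_j=\mathbf{0}$ and $\x_j=\bar\x$, the residual in \eqref{eq:forcing} reduces to $\boldsymbol{\xi}_j=\widetilde R_j\nabla f(\bar\x)$. Since $\widetilde R_j$ is orthogonal, $\|\boldsymbol{\xi}_j\|_2=\|\nabla f(\bar\x)\|_2$, so \eqref{eq:forcing} reads $\|\nabla f(\bar\x)\|_2\leq\eta_j\|\nabla f(\bar\x)\|_2$. Because $\eta_j\leq\bar\eta<1$, this holds if and only if $\nabla f(\bar\x)=\mathbf{0}$. Condition \eqref{eq:inexact-second-order} at $\s_j=\mathbf{0}$ reduces to $\lambda_{\min}(\widetilde B_j)\geq-\omega_j$. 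So the statement is equivalent to the following claim: for a critical point $\bar\x$, we have $\lambda_{\min}(\widetilde B_j)\geq-\omega_j$ if and only if $\bar\x$ has index $k$. Both directions compare $\widetilde B_j$ with the exact reflected matrix $R_jH_j$, where $H_j=\nabla^2 f(\bar\x)$. Exactly as in \eqref{eq:B-perturbation}, but with $L$ replaced by $\|H_j\|_2$, we get $\|\widetilde B_j-R_jH_j\|_2\leq 2\|H_j\|_2\delta_j$. Weyl's inequality then gives $|\lambda_{\min}(\widetilde B_j)-\lambda_{\min}(R_jH_j)|\leq 2\|H_j\|_2\delta_j$.

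For the ``if'' direction, let $\bar\x$ be index-$k$ and non-degenerate. As in the proof of \Cref{thm:stability}, the eigenvalues of $R_jH_j$ are $-\lambda_1,\dots,-\lambda_k,\lambda_{k+1},\dots,\lambda_d$, all positive, so $\lambda_{\min}(R_jH_j)=\gamma(\bar\x)$. Weyl then gives $\lambda_{\min}(\widetilde B_j)\geq\gamma(\bar\x)-2\|H_j\|_2\delta_j$. By \eqref{eq:inexact-index-condition} this exceeds $\omega_j\geq 0\geq-\omega_j$, so \eqref{eq:inexact-second-order} holds.

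For the ``only if'' direction, suppose $\bar\x$ is critical of index $\bar k\neq k$. The eigenvector argument in \Cref{thm:stability} shows that $R_jH_j$ has an eigenvalue $\beta<0$. If $\bar k>k$, take $\beta=\lambda_{k+1}$; if $\bar k<k$, take $\beta=-\lambda_{\bar k+1}$. In either case $|\beta|$ equals $|\lambda_i|$ for some $i$, hence $\beta\leq-\gamma(\bar\x)$. Weyl then gives $\lambda_{\min}(\widetilde B_j)\leq-\gamma(\bar\x)+2\|H_j\|_2\delta_j$. By \eqref{eq:inexact-index-condition} this is strictly less than $-\omega_j$, which contradicts \eqref{eq:inexact-second-order}. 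Hence $\bar k=k$.

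The main point to handle carefully is that the perturbation bound must use $\|\nabla^2 f(\bar\x)\|_2$ rather than the constant $L$ from \Cref{assumption:local}. This requires $\|\widetilde R_j-R_j\|_2=2\delta_j$ and $R_jH_j=H_jR_j$, both of which hold at any point for the exact spectral projector. The case where $\lambda_k=\lambda_{k+1}$ at $\bar\x$ needs no separate treatment: non-degeneracy only guarantees that eigenvalues are nonzero, so in the index-$\bar k\neq k$ case the projector $P_j$ may be non-unique. Any choice of $k$ orthonormal eigenvectors for the $k$ smallest eigenvalues still commutes with $H_j$ and yields the stated eigenvalue structure, and $\delta_j$ is measured relative to that choice.
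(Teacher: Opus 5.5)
Your proof is correct and follows essentially the same route as the paper. The zero-step residual condition forces $\nabla f(\bar\x)=\mathbf{0}$ because $\widetilde R_j$ is orthogonal and $\eta_j<1$; both directions then compare $\lambda_{\min}(\widetilde B_j)$ with the exact reflected Hessian via $\|\widetilde B_j-R_jH_j\|_2\leq 2\|\nabla^2 f(\bar\x)\|_2\delta_j$ and Weyl's inequality. Handling the gradient equivalence first and your remark on the non-uniqueness of $P_j$ when $\bar k\neq k$ are harmless refinements that the paper leaves implicit.
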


\begin{proof}
Suppose first that $\bar \x$ is an index-$k$ critical point of $f$. Since
$\nabla f(\bar \x)=\mathbf{0}$, $\|\boldsymbol{\xi}_j\|_2  =  0  \leq   \eta_j\|\nabla f(\bar \x)\|_2$,
and therefore the first-order condition \eqref{eq:forcing} is satisfied.
Let
$$H=\nabla^2f(\bar \x),\quad
   P=\sum_{i=1}^k\vvec_i\vvec_i^\top,\quad
   R=\I-2P, \quad B=RH.$$
Since $\bar \x$ is an index-$k$ critical point, the eigenvalues of $H$ satisfy
\[
   \lambda_1\leq\cdots\leq\lambda_k<0  <  \lambda_{k+1}\leq\cdots\leq\lambda_d.
\]
Therefore, the eigenvalues of $B$ are $-\lambda_1,\ldots,-\lambda_k,
   \lambda_{k+1},\ldots,\lambda_d$,
hence, $B\succeq\gamma(\bar \x)\I$.
Since $\|\widetilde R_j-R\|_2 =  2\|\widetilde P_j-P\|_2  =
   2\delta_j$ and $RH=HR$, the definition of $\widetilde B_j$ gives
\begin{equation}\label{eq:inexact-B-critical-perturbation}
   \|\widetilde B_j-B\|_2 =
   \left\|  \frac{1}{2}   \left(   (\widetilde R_j-R)H
   +  H(\widetilde R_j-R)  \right)
   \right\|_2 \leq   2\|H\|_2\delta_j.
\end{equation}
It follows from $B\succeq\gamma(\bar \x)\I$ and
\eqref{eq:inexact-B-critical-perturbation} that $\lambda_{\min}(\widetilde B_j) \geq   \gamma(\bar \x) -  2\|H\|_2\delta_j.$ By \eqref{eq:inexact-index-condition},
\[
   \gamma(\bar \x) -   2\|H\|_2\delta_j  >   \omega_j  \geq -\omega_j.
\]
Therefore, $\lambda_{\min} \left( \widetilde B_j+\sigma_j\|\mathbf{0}\|_2\I  \right) =  \lambda_{\min}(\widetilde B_j)  \geq -\omega_j$, 
and \eqref{eq:inexact-second-order} is satisfied. Hence, $\bar \x$ admits the zero inexact step, that is, $\s_j=\mathbf{0}$.

Conversely, suppose $\bar \x$ admits the zero inexact step. Taking
$\s_j=\mathbf{0}$ in \eqref{eq:forcing} gives $\boldsymbol{\xi}_j
   =
   \widetilde R_j\nabla f(\bar \x)$. Since $\widetilde R_j$ is an orthogonal reflection, $\|\widetilde R_j\nabla f(\bar \x)\|_2
   =
   \|\nabla f(\bar \x)\|_2$. Therefore, \eqref{eq:forcing} yields $\|\nabla f(\bar \x)\|_2
   \leq
   \eta_j\|\nabla f(\bar \x)\|_2$. Since $\eta_j\leq\bar\eta<1$, it follows that $\nabla f(\bar \x)=\mathbf{0}$. Hence, $\bar \x$ is a critical point of $f$.  
   Let  $\bar k$ denote the index of $\bar \x$. Suppose $\bar k\neq k$. If $\bar k>k$, then at least one negative eigenvalue of $\nabla^2f(\bar \x)$ lies outside the reflected subspace. If $\bar k<k$, then at least one positive eigenvalue lies inside the reflected subspace and is changed to a negative one. In either case, the exact reflected Hessian has a negative eigenvalue satisfying
$  \lambda_{\min}(B)  \leq   -\gamma(\bar \x)$.
Using \eqref{eq:inexact-B-critical-perturbation} and
$  \lambda_{\min}(B)  \leq
   -\gamma(\bar \x)$, we obtain
$$
   \lambda_{\min}(\widetilde B_j)  \leq
   \lambda_{\min}(B)   +
   \|\widetilde B_j-B\|_2 
   \leq   -\gamma(\bar \x)   +   2\|\nabla^2f(\bar \x)\|_2\delta_j.$$
By \eqref{eq:inexact-index-condition}, $-\gamma(\bar \x)  +
   2\|\nabla^2f(\bar \x)\|_2\delta_j < -\omega_j$. Hence, $\lambda_{\min}(\widetilde B_j)   <   -\omega_j$. On the other hand, since $\bar \x$ admits the zero inexact step, we have
\eqref{eq:inexact-second-order} with $\s_j=\mathbf{0}$, which requires $\lambda_{\min}(\widetilde B_j)
   \geq
   -\omega_j$. We thus have a contradiction. Consequently, we must have $\bar k=k$, and $\bar \x$ is an
index-$k$ critical point of $f$.
\end{proof}

\section{The methods for updating the invariant subspaces and solving the subproblems}\label{sec: methods for subproblem}
A key advantage of \Cref{algorithm 1.1} is its flexibility in the choice of numerical procedures for solving the subproblems and computing the invariant subspaces for $\{\vvec_i\}_{i=1}^k=\mathrm{Eigensolver}(\nabla^2 f(\x_j))$. In particular, since consecutive iterates $\x_j$ and $\x_{j+1}$ are typically close, the negative-curvature directions $\{\vvec_i\}_{i=1}^k$ of $\nabla^2 f(\x_j)$ computed at the $j$-th iteration can provide effective initial guesses for the corresponding directions of $\nabla^2 f(\x_{j+1})$. Hence, in practice, these directions can be updated rather than recomputed from scratch, which keeps the associated computational cost moderate. Moreover, the cubic subproblem has the same form as the standard cubic regularization subproblem. Consequently, one can directly exploit efficient numerical techniques developed for cubic-regularized minimization. Therefore, the cost of solving the subproblem is also moderate.

\subsection{Solvers for the cubic subproblem}

We first briefly introduce numerical approaches for solving the cubic subproblem in \Cref{algorithm 1.1}: \begin{equation} \min_{\s\in\mathbb{R}^d} \; q(\s):= f(\x)+ \g^\top \s+\frac12 \s^\top H\s+\frac{\sigma}{3}\|\s\|^3_2, \quad \sigma>0, \label{eq:cubic_subproblem} \end{equation} where $H\in\mathbb{R}^{d\times d}$ is symmetric. By \Cref{prop:cubic-opt}, a vector $\s^*$ is a global minimizer of \eqref{eq:cubic_subproblem} if and only if there exists $\lambda^*\geq 0$ such that \begin{equation} (H+\lambda^* \I)\s^*=-\g, \quad H+\lambda^* \I\succeq 0, \quad \lambda^*=\sigma\|\s^*\|_2. \label{eq:cubic_opt_cond} \end{equation} These conditions reduce the nonlinear vector optimization problem to the determination of a scalar shift $\lambda^*$ together with a shifted linear system; see, e.g., \cite{nesterov2006cubic,cartis2011adaptive}.

\paragraph{Direct solution for small- and medium-scale problems}
When $d$ is moderate and an explicit factorization of $H$ is affordable, \eqref{eq:cubic_subproblem} can be solved conveniently through a spectral decomposition. Let \[ H=U\Theta U^\top, \quad \Theta=\operatorname{diag}(\theta_1,\ldots,\theta_d), \quad \theta_1\leq\cdots\leq\theta_d, \] and define $\widehat \g=U^\top \g$. In the nondegenerate case $\lambda^*>\max\{0,-\theta_1\}$, \eqref{eq:cubic_opt_cond} gives \[ \s^* = -U(\Theta+\lambda^* \I)^{-1}\widehat \g, \] where $\lambda^*\in (\max\{0,-\theta_1\},\infty)$ is the unique solution of the scalar equation \begin{equation} \lambda^2 = \sigma^2 \sum_{i=1}^d \frac{\widehat g_i^2}{(\theta_i+\lambda)^2}. \label{eq:cubic_secular} \end{equation} Thus, after an eigendecomposition of $H$, the original $d$-dimensional problem reduces essentially to a one-dimensional root-finding problem, which can be treated by standard root solvers. The only additional care is required in the so-called hard case. If $\g$ is orthogonal to the eigenspace associated with $\theta_1$ and the optimal shift satisfies $\lambda^*=-\theta_1$, the shifted matrix is singular and the solution has the form \begin{equation} \s^* = -(H-\theta_1 \I)^\dagger \g+\mathbf{u}, \quad \mathbf{u}\in\ker(H-\theta_1 \I), \label{eq:cubic_hardcase} \end{equation} where the norm of $\mathbf{u}$ is chosen so that $\sigma\|\s^*\|_2=-\theta_1$. Consequently, for low-dimensional problems, a spectral decomposition provides a conceptually simple and numerically direct solver for both the generic and hard cases. 

\paragraph{Lanczos--Krylov subspace methods for large-scale problems} For large-scale problems, forming a full spectral decomposition is very expensive. A standard alternative \cite{gould1999solving} is to restrict \eqref{eq:cubic_subproblem} to a Krylov subspace generated by $H$ and $\g$, \[ \mathcal K_\ell(H,\g) := \operatorname{span}\{\g,H\g,\ldots,H^{\ell-1}\g\}, \quad \ell\geq 2. \] Let $Q_\ell$ be an orthonormal basis generated by the Lanczos process. Then $$ H Q_\ell = Q_\ell T_\ell + \beta_\ell \mathbf q_{\ell+1}\mathbf e_\ell^\top, \quad Q_\ell^\top \g=\|\g\|_2\mathbf e_1,  $$ where $T_\ell=Q_\ell^\top H Q_\ell$ is symmetric tridiagonal. Writing $\s=Q_\ell \y$ yields the reduced problem 
\begin{equation} \y_\ell \in \arg \min_{\y\in\mathbb{R}^{\ell}} \left\{ f(\x)+ \|\g\|_2 \mathbf e_1^\top \y +\frac12 \y^\top T_\ell \y +\frac{\sigma}{3}\|\y\|^3_2 \right\}, \quad \s_\ell=Q_\ell \y_\ell. \label{eq:reduced_cubic} \end{equation} Since $\ell\ll d$ in typical large-scale applications, \eqref{eq:reduced_cubic} can in turn be solved by the direct spectral procedure described in \eqref{eq:cubic_secular}. The resulting scheme requires only matrix--vector products with $H$ and is therefore particularly applicable when the Hessian is either sparse (e.g., for discrete energy functionals with local interactions) or available only through Hessian--vector products. For completeness, we recall the basic convergence behavior in the nondegenerate case \cite{carmon2018analysis,jia2022solving}. Suppose that $\g$ has a nonzero component in the eigenspace corresponding to $\theta_1$, and let \[ H^*:=H+\lambda^* \I, \quad \kappa := \frac{\theta_d+\lambda^*} {\theta_1+\lambda^*}, \quad \rho := \frac{\sqrt{\kappa}-1} {\sqrt{\kappa}+1} <1. \] Then the Lanczos approximations satisfy, up to constants independent of $\ell$, \begin{equation} \|\s_\ell-\s^*\|_2 = \mathcal O(\rho^\ell), \quad q(\s_\ell)-q(\s^*) = \mathcal O(\rho^{2\ell}). \label{eq:lanczos_cubic_rate} \end{equation}
Moreover, Lanczos terminates with the exact solution once the Krylov subspace reaches the grade of $\g$ with respect to $H$, i.e., once the subspace ceases to grow. The hard case requires special treatment. Indeed, if $\g$ is orthogonal to the leftmost eigenspace of $H$, then $\mathcal K_\ell(H,\g)$ remains orthogonal to this eigenspace for every $\ell$, whereas the global minimizer may contain a nonzero component in that direction. A restart with an additional direction is therefore needed to recover the missing eigenspace component; see \cite{gould1999solving,carmon2018analysis,jia2022solving}. 

\paragraph{Variants and alternative solvers} Some variants of the basic Krylov approach have been developed to
address limitations that arise in large-scale cubic subproblems. For example, for ill-conditioned problems, the dimension of the subspace required
for an accurate solution can become large, leading to increased storage
and orthogonalization costs. Nested restarted Lanczos methods address
this issue by periodically restarting the Krylov process while retaining
a small set of useful correction directions
\cite{zhang2018nested}. Alternatively, the cubic subproblem can be reformulated as a generalized eigenvalue problem \cite{lieder2020solving}, while first-order methods such as gradient descent or accelerated projected methods have also been developed \cite{carmon2019gradient,jiang2021accelerated}.

\subsection{Updating the invariant subspace}
\label{subsec:update_invariant_subspaces}
In this subsection, we introduce some methods to realize $\{\vvec_i\}_{i=1}^k=\mathrm{Eigensolver}(\nabla^2 f(\x_j))$ in \Cref{algorithm 1.1}. Let $H_j := \nabla^2 f(\x_j)$, and suppose that
$$
    V_j=[\vvec_1^{(j)},\ldots,\vvec_k^{(j)}]\in\mathbb{R}^{d\times k},
    \quad
    V_j^\top V_j=\I,
$$
approximates the invariant subspace of $H_j$ associated with its $k$
smallest eigenvalues. Since consecutive iterates $\x_j$ and $\x_{j+1}$
are typically close, the corresponding Hessians are also expected to
vary moderately. Consequently, the invariant subspace computed at the
$j$th iteration provides a natural initial approximation for the
corresponding invariant subspace of $H_{j+1}$. Rather than recomputing
the negative-curvature directions from scratch, we therefore update
them using an iterative eigensolver initialized with $V_j$. Similar ideas have been used for sequences of related eigenvector problems \cite{zbikowski2023bootstrapped, salas2015spectral, Zhuzhunashvili2017Preconditioned, 2019High}. We describe
two representative choices below.

\paragraph{Block Lanczos--Krylov methods}
A natural approach is to construct a block Krylov subspace
initialized by the previously computed invariant subspace,
\begin{equation}
    \mathcal K_{\ell}(H_{j+1},V_j)
    :=
    \operatorname{span}
    \left\{
        V_j,\,
        H_{j+1}V_j,\,
        \ldots,\,
        H_{j+1}^{\ell-1}V_j
    \right\}.
    \label{eq:block_krylov_invariant}
\end{equation}
The block Lanczos process generates an orthonormal basis of
\eqref{eq:block_krylov_invariant} while exploiting the symmetry of
$H_{j+1}$. A Rayleigh--Ritz procedure over the resulting subspace then
provides approximations to the leftmost eigenpairs of $H_{j+1}$.
In particular, if $Q_\ell$ denotes an orthonormal basis of the block
Krylov space, the projected eigenvalue problem $Q_\ell^\top H_{j+1}Q_\ell y=\theta y$
is solved, and the corresponding Ritz vector $Q_\ell y$ is used to
approximate an eigenvector of $H_{j+1}$. It is especially advantageous to use $V_j$ as the starting block in
the present setting. When the Hessian changes only slightly between two
consecutive outer iterations, $V_j$ already contains substantial
information about the desired eigenspace of $H_{j+1}$, and therefore
only a moderate Krylov expansion may be required. Moreover, the block
form is preferable to a single-vector Lanczos iteration when several
negative eigenvalues are clustered or have nontrivial multiplicities.

For large-scale problems, allowing the Krylov space to grow indefinitely
is undesirable because of both storage and orthogonalization costs.
In our setting, however, the Krylov space is not accumulated across
outer iterations. At iteration $j$, a Krylov subspace of a fixed depth $L$
is constructed for the current matrix $\nabla^2 f(\x_j)$, using the approximate
invariant subspace $V_{j-1}$ obtained from the previous iteration as the
initial block:
\begin{equation}
    \mathcal{K}_L(\nabla^2 f(\x_j),V_{j-1})
    =
    \operatorname{span}
    \left\{
        V_{j-1},
        \nabla^2 f(\x_j)V_{j-1},
        \ldots,
        \nabla^2 f(\x_j)^{L-1}V_{j-1}
    \right\}.
\end{equation}
A Rayleigh--Ritz extraction is then performed over this subspace of fixed dimension 
 to obtain the updated approximation $V_t$ associated with the
leftmost Ritz values. The remaining Krylov basis vectors are discarded,
and only $V_t$ is recycled as the initial block for the next matrix
$A_{t+1}$. Thus, the dimension of the Krylov subspace, and consequently
the storage and orthogonalization costs, remain bounded at every outer
iteration. This construction can be viewed as a fixed-depth, warm-started block Krylov iteration.

\paragraph{Restarted LOBPCG}
The locally optimal block
preconditioned conjugate gradient (LOBPCG) method
\cite{knyazev2001toward} provides another effective approach for
updating the desired invariant subspace. Starting from $X^{(0)}=V_j$, LOBPCG maintains a block approximation
$X^{(t)}\in\mathbb{R}^{d\times k}$ to the invariant subspace associated
with the $k$ smallest eigenvalues of $H_{j+1}$. Let
\begin{equation}
    \Theta^{(t)}
    :=
    (X^{(t)})^\top H_{j+1}X^{(t)}
    \label{eq:lobpcg_rayleigh}
\end{equation}
be the block Rayleigh quotient. The corresponding residual is
\begin{equation}
    R^{(t)}
    :=
    H_{j+1}X^{(t)}
    -
    X^{(t)}\Theta^{(t)}.
    \label{eq:lobpcg_residual2}
\end{equation}
Given a preconditioner $T^{(t)}$, the residual is transformed into
\[
    W^{(t)}=T^{(t)}R^{(t)},
\]
and the next approximation is obtained by performing a Rayleigh--Ritz
procedure over the locally optimal trial subspace
\begin{equation}
    \mathcal S^{(t)}
    :=
    \operatorname{span}
    \left\{
        X^{(t)},\,
        W^{(t)},\,
        P^{(t)}
    \right\},
    \label{eq:lobpcg_trial_space}
\end{equation}
where $P^{(t)}$ contains search directions inherited from the preceding
LOBPCG iteration. The Ritz vectors associated with the $k$ smallest
Ritz values are retained to form $X^{(t+1)}$. The locally optimal construction in
\eqref{eq:lobpcg_trial_space} combines three sources of information:
the current approximation, the preconditioned residual, and the search
directions accumulated from the preceding iteration. Consequently,
LOBPCG can often obtain accurate approximations using a small trial
subspace, without explicitly constructing a long Krylov basis.
Its block structure is also advantageous when the targeted eigenvalues
are clustered.

Both approaches, as well as other eigensolvers, fit naturally within \Cref{algorithm 1.1} and can be warm-started using the approximate invariant subspace obtained from the preceding outer iteration. Importantly, the outer framework does not depend on the particular eigensolver used and, as discussed in \Cref{sec:inexact}, does not require the exact eigenspace. It only requires a sufficiently accurate approximation of the invariant subspace associated with the relevant $k$ smallest eigenvalues.
\subsection{Adaptive strategy for the cubic regularization parameter}
Finally, the remaining detail for implementing \Cref{algorithm 1.1} is the choice of the cubic regularization parameter $\sigma_j$. An excessively large $\sigma_j$ may lead to overly conservative steps, thus slowing down practical convergence. On the other hand, an excessively small $\sigma_j$ can also be undesirable for two reasons. First, if the cubic regularization is insufficient to control the third-order remainder of $f$, the surrogate model in the subproblem may provide a poor approximation. Second, a small $\sigma_j$ may allow a relatively large step, leading to a substantial variation in the Hessian between consecutive iterates. Consequently, the associated invariant subspaces may change significantly, which can make the Eigensolver procedure less robust. 

In standard trust-region and cubic regularization methods for minimization, the regularization parameter is typically adjusted by comparing the actual reduction in the objective with the reduction predicted by the local model \cite{conn2000trust, cartis2011adaptive}. This criterion is not directly suitable for saddle-point search, since the iteration is designed to decrease the energy along the stable subspace while increasing it along the unstable subspace. Consequently, the total energy variation need not be negative, and a reduction-based ratio may not provide an appropriate measure of the quality of the local model. Instead, we measure the accuracy of the second-order Taylor approximation, independently of the sign of the energy variation. Let $\s_j=\x_{j+1}-\x_j$ denote the step at iteration $j$, and define $\Delta_j=\|
\nabla f(\x_{j+1})-\nabla f(\x_j)
-\nabla^2 f(\x_j)\s_j\|_2$. Since the remainder of the Taylor expansion is of order $O(\|\s_j\|^2_2)$, we compare this discrepancy with the cubic regularization term and define $d_j
=
\frac{\Delta_j}
{\sigma_j\|\s_j\|_2^2}$. Thus, $d_j$ measures the size of the observed higher-order model error relative to the current cubic regularization. The parameter $\sigma_j$ is then adaptively updated according to
\begin{equation}\label{eq: adaptive parameter}
\sigma_{j+1}=
\begin{cases}
\max(\gamma_{dec}\sigma_j,\sigma_{min}), \ d_j < d_l, \\
\sigma_j, \ d_l\le d_j \le d_u,\\
\min(\gamma_{inc}\sigma_j,\sigma_{max}), \ d_j > d_u,
\end{cases}
\end{equation}
where $0<\gamma_{dec}<1, \gamma_{inc}>1$, $0<d_l<d_u$ are constants (e.g. $\gamma_{dec}=0.6, \gamma_{inc}=2$, $d_l=0.5$, $d_u=1$). When $d_j$ is small, the cubic term is relatively conservative compared with the observed model discrepancy, and $\sigma_j$ is reduced. When $d_j$ lies in the acceptance interval, the current regularization level is retained. If $d_j>d_u$, the higher-order model error exceeds the scale of the cubic regularization, and $\sigma_j$ is increased to provide stronger stabilization. The lower and upper bounds, $\sigma_{min}$ and $\sigma_{max}$, prevent the regularization parameter from becoming excessively small or large during the iteration. As an additional practical safeguard, one may optionally reject a trial step when $d_j$ exceeds a prescribed threshold and recompute the step with an increased value of $\sigma_j$. It is worth noting that computing $\Delta_j$ incurs essentially no additional computational cost, since $\nabla f(\x_j),\nabla f(\x_{j+1})$ and $\nabla^2f(\x_j)$ are already available during the iteration.

\section{Construction of the solution landscape}\label{sec: solution landscape}
Starting from the same initial condition, \Cref{algorithm 1.1} with different $k$ enables us to find several critical points with various indices of $f$. How can we search for as many critical points as possible from these existing critical points? Constructing the solution landscape is a way to answer this question \cite{yin2021searching,yin2020construction}. 

Using \Cref{algorithm 1.1}, we construct the solution landscape by two procedures: the
downward search that enables us to search for lower-index critical points; the upward search to find the higher-index critical points.
First, the downward search is used to search for possible lower-index critical points
starting from an existing high-index critical point by following its unstable directions. We assume that an index-$k$ critical point $\x^*$ and eigenvectors $\{\vvec_i\}_{i=1}^k$ are provided by \Cref{algorithm 1.1}. To search for a new lower index-$\bar k$ ($\bar k<k$) critical point, we choose an unstable
direction $\vvec_{ini}$ (a typical choice is $\vvec_{m+1}$) which is a linear combination of $\{\vvec_i\}_{i=1}^k$ as the perturbation direction
of $\x^*$, and set $\x^*\pm \bar \epsilon \vvec_{ini}$ as an initial condition of \Cref{algorithm 1.1} of index $m$. Since local attractors of \Cref{algorithm 1.1} of index $m$ are index-$m$ critical points, \Cref{algorithm 1.1} typically drives the iteration point towards index-$m$ critical points, which differ from the existing index-$k$ critical points. Normally, we can find a pair of index-$m$ critical
points, which correspond to the positive and negative driving forces. This also highlights the difference between our method and Newton’s method. If we apply Newton’s method starting from $\x^*\pm \bar \epsilon \vvec_{ini}$, the iteration will be driven back toward $\x^*$, rather than converging to a new index-$m$ critical point, because every nondegenerate critical point is a local attractor of Newton’s method. Similarly, starting from a known lower-index critical point, we can try to search for higher-index critical points. By repeating the downward search and upward search, we can systematically search for possible critical points, including both unstable saddle points of different indices and minima. See \Cref{subsec: four-well potential} and \Cref{subsec: LdG} for examples.
\section{Numerical experiments}\label{sec: Numerics}
In this section, we present several numerical experiments to validate our theoretical results and demonstrate the algorithm's applicability. In particular, we demonstrate the quadratic convergence rate with “exact” subproblem solutions (\Cref{thm:quadratic}), the index-selectivity properties in \Cref{thm:stability} and \Cref{prop:inexact-stability}, and the linear convergence rate with inexact computations (\Cref{thm:inexact-recursion}). We also illustrate the construction of solution landscapes, demonstrate the advantage of the adaptive regularization strategy over fixed regularization parameters, and compare it with the standard adaptive strategy used in optimization.

\subsection{Four-well potential}\label{subsec: four-well potential}
\begin{figure}
    \centering
    \includegraphics[width=0.99\linewidth]{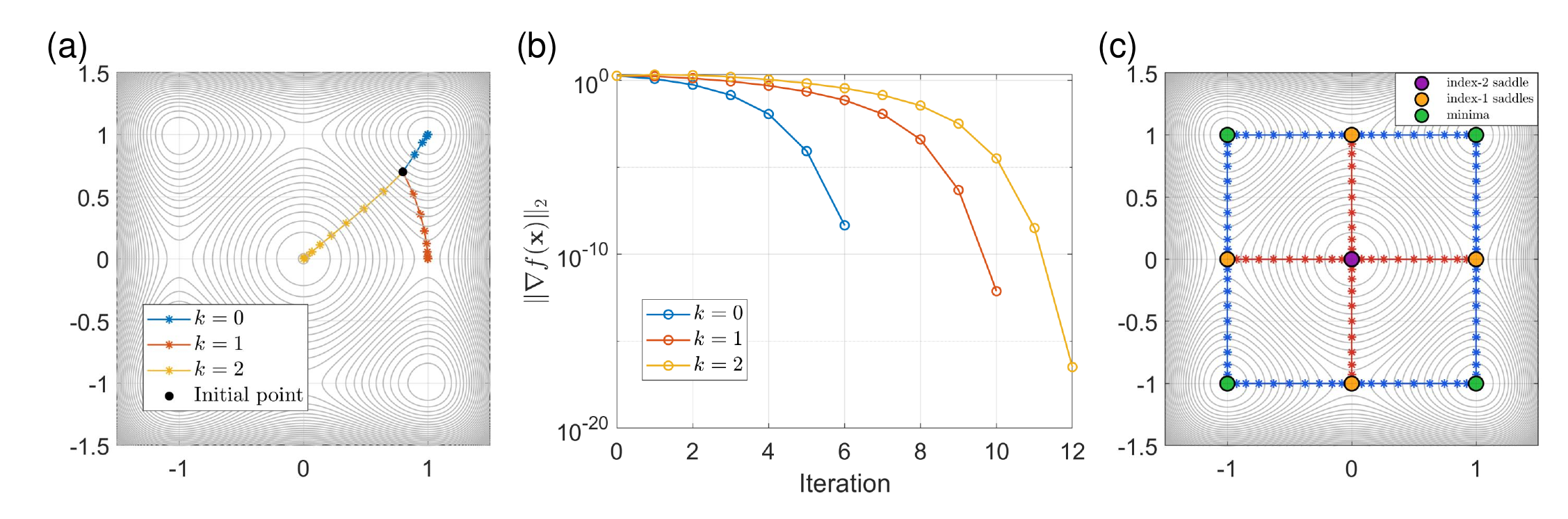}
    \caption{
Numerical results for the four-well potential \eqref{eq: double well potential}.
(a) Trajectories of \Cref{algorithm 1.1} starting from the same initial point $(0.8,0.7)$ with $k=0,1,2$, which converge to the minimum $(1,1)$, the index-$1$ saddle point $(1,0)$, and the index-$2$ saddle point $(0,0)$, respectively.
(b) Convergence histories of $\|\nabla f(\x_j)\|_2$ for the three trajectories in (a), demonstrating quadratic convergence.
(c) Construction of the complete solution landscape starting from the index-$2$ saddle point $(0,0)$. The warm-red trajectories connect the index-$2$ saddle point to the four index-$1$ saddle points, while the blue trajectories connect the index-$1$ saddle points to the four minima.}
    \label{fig:1}
\end{figure}

In \Cref{fig:1}, we apply \Cref{algorithm 1.1} with different values of $k$, using the same initial condition, to the quartic four-well potential
\begin{equation}
f(x,y) = (x^2-1)^2 + (y^2-1)^2,
\label{eq: double well potential}
\end{equation}
which has four minimizers $(\pm 1,\pm 1)$, four index-$1$ saddle points $(0,\pm 1)$ and $(\pm 1,0)$, and one index-$2$ saddle point $(0,0)$. In all computations in this example, we use the fixed cubic regularization parameter $\sigma=50$. Since the problem is two-dimensional, the invariant subspace is computed by a full eigendecomposition of the Hessian, and the cubic subproblem is solved to numerical precision.

In \Cref{fig:1}(a), starting from the initial condition $(0.8,0.7)$, the trajectories corresponding to $k=0,1,2$ converge to the minimum $(1,1)$, the index-$1$ saddle point $(1,0)$, and the index-$2$ saddle point $(0,0)$, respectively, in agreement with \Cref{thm:stability}. In \Cref{fig:1}(b), we plot the error $\|\nabla f(\x)\|_2$ against the iteration number for these three trajectories, demonstrating the quadratic convergence rate. Thus, by varying $k$ while keeping the same initial condition $(0.8,0.7)$, \Cref{algorithm 1.1} identifies critical points of different indices, including the index-$2$ saddle point $(0,0)$.

In \Cref{fig:1}(c), we illustrate how the solution landscape can be constructed from this index-$2$ saddle point to locate all critical points of \eqref{eq: double well potential}. Since $\nabla^2 f(0,0)$ has two unstable directions, $\vvec_1=(0,1)$ and $\vvec_2=(1,0)$, we generate four perturbed initial conditions along $\pm \vvec_i$, $i=1,2$. Starting from these initial conditions, \Cref{algorithm 1.1} with $k=1$ produces four trajectories that converge to the four index-$1$ saddle points $(0,\pm 1)$ and $(\pm 1,0)$, as shown by the warm-red trajectories in \Cref{fig:1}(c). Subsequently, by perturbing each of these index-$1$ saddle points along its unstable direction and applying \Cref{algorithm 1.1} with $k=0$, we obtain the remaining four minima, as indicated by the blue trajectories in \Cref{fig:1}(c). In this way, the complete solution landscape of the four-well potential can be systematically constructed from the index-$2$ saddle point.

\subsection{Rosenbrock function}

\begin{figure}
    \centering
    \includegraphics[width=0.9\linewidth]{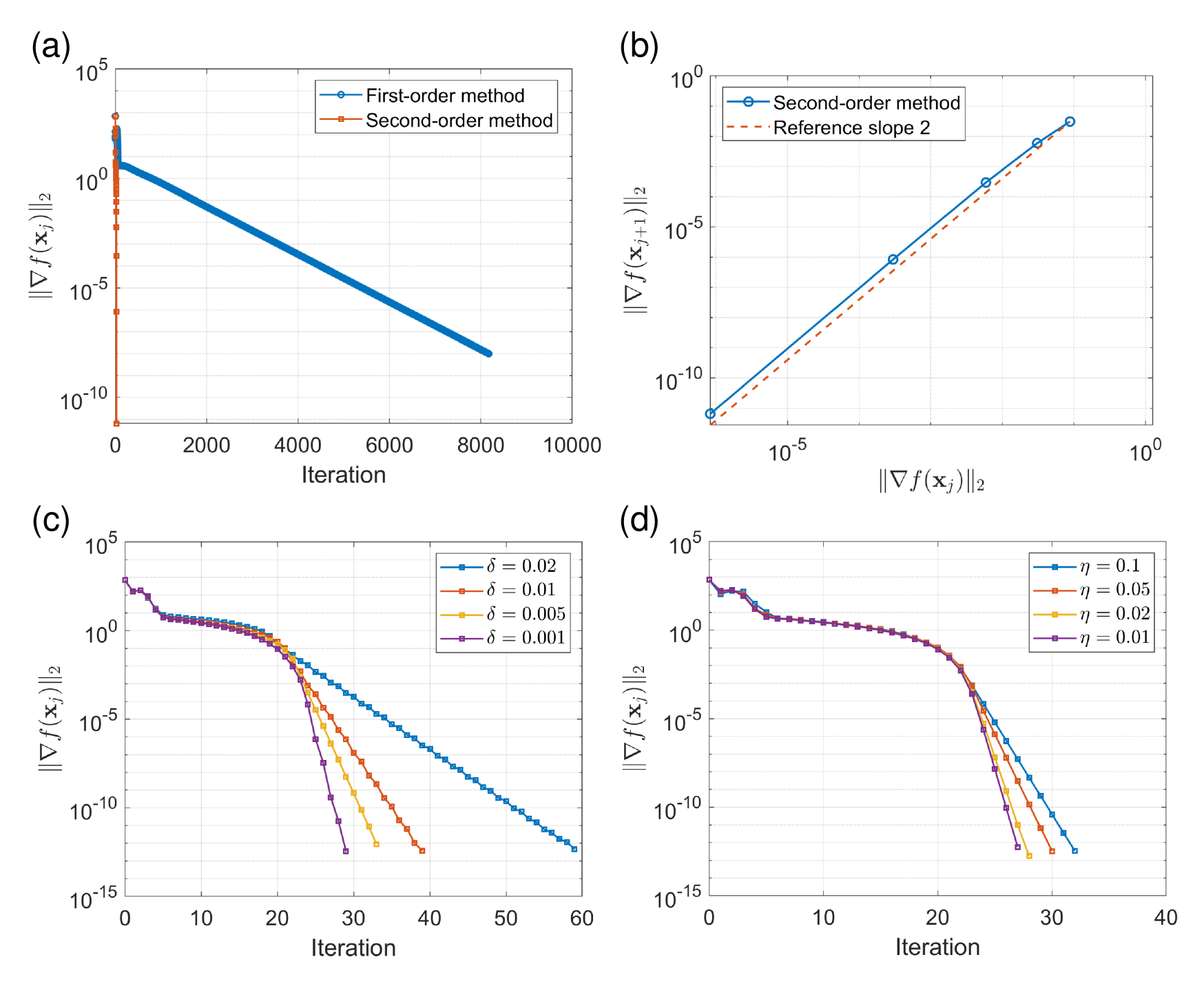}
    \caption{
Numerical results for the modified Rosenbrock function.
(a) Comparison between the first-order method \eqref{eq: first-order method} and the second-order method in \Cref{algorithm 1.1}, measured by the decay of $\|\nabla f(\x_j)\|_2$ with respect to the iteration number.
(b) Log-log plot of $\|\nabla f(\x_{j+1})\|_2$ versus $\|\nabla f(\x_j)\|_2$ for the second-order method. The reference line has slope $2$, illustrating the quadratic convergence rate.
(c)--(d) Convergence histories of \Cref{algorithm 1.1} with inexact computations. In (c), $\delta$ characterizes the error level in the unstable subspace, while in (d), $\eta$ characterizes the accuracy of the numerical solution to the cubic subproblem.}
    \label{fig:2}
\end{figure}

The Rosenbrock function \cite{shang2006note} features a narrow and curved valley with a flat energy landscape around the minimum, so it is also known as the banana function. It is a classical benchmark for optimization algorithms due to its challenging geometry, which makes convergence to the global minimum difficult. To adapt the Rosenbrock example for saddle-point computation, we modify it by adding extra quadratic arctangent terms \cite{2019High}, yielding
$$
  f(\x)=\sum_{i=1}^{d-1}100(x_{i+1}-x_i^2)^2+(1-x_i)^2+\sum_{i=1}^ds_i\arctan^2(x_i-1),
$$
where $\{s_i\}_{i=1}^d$ are the parameters introduced to tune the index of the critical point $\x^*=(1,\cdots,1)^\top$. We take $d=100, s_1=-1000,s_i=1, i\neq 1$, $\x^*$ is an index-1 saddle point. In all experiments in this example, we use $\sigma=80$. For the first-order method, we use $\Delta t=10^{-3}$, chosen near the upper end of the stable range. For the inexact experiments, we isolate the two sources of approximation error. We first set $\delta\approx0$ and vary $\eta\in\{0.1,0.05,0.02,0.01\}$. We then set $\eta\approx 0$ and vary $\delta\in\{0.02,0.01,0.005,0.001\}$. 

In \Cref{fig:2}(a), we compare the performance of the second-order method in \Cref{algorithm 1.1} with that of the first-order method in \eqref{eq: first-order method}. The second-order method reaches the prescribed accuracy within only tens of iterations and requires less than $0.1$ seconds of CPU time. In contrast, the first-order method requires thousands of iterations and several seconds of CPU time to achieve the same accuracy, demonstrating the substantial computational advantage of the second-order method. In \Cref{fig:2}(b), we plot $\|\nabla f(\x_{j+1})\|_2$ against $\|\nabla f(\x_j)\|_2$. The tail of the curve exhibits a slope of approximately $2$ on the log-log scale, confirming the quadratic convergence rate predicted by the theory. In \Cref{fig:2}(c-d), we consider \Cref{algorithm 1.1} with inexact unstable subspaces and inexact solutions of the subproblem, where $\eta$ and $\delta$ characterize the corresponding approximation accuracies. In this setting, the asymptotic convergence rate becomes linear. Nevertheless, the linear convergence remains very fast because the corresponding contraction factor depends on the parameters $\delta$ and $\eta$ in \Cref{thm:inexact-recursion}, both of which are small in practice. Indeed, the algorithm still reaches the prescribed accuracy within tens of iterations in \Cref{fig:2}(c-d). Moreover, as the approximations of the unstable subspace and the subproblem solution become more accurate (or $\delta$ and $\eta$ decrease), the convergence becomes progressively faster, consistent with the dependence characterized in \Cref{thm:inexact-recursion}.

\subsection{Landau-de Gennes energy functional}\label{subsec: LdG}

\begin{figure}
    \centering
    \includegraphics[width=0.99\linewidth]{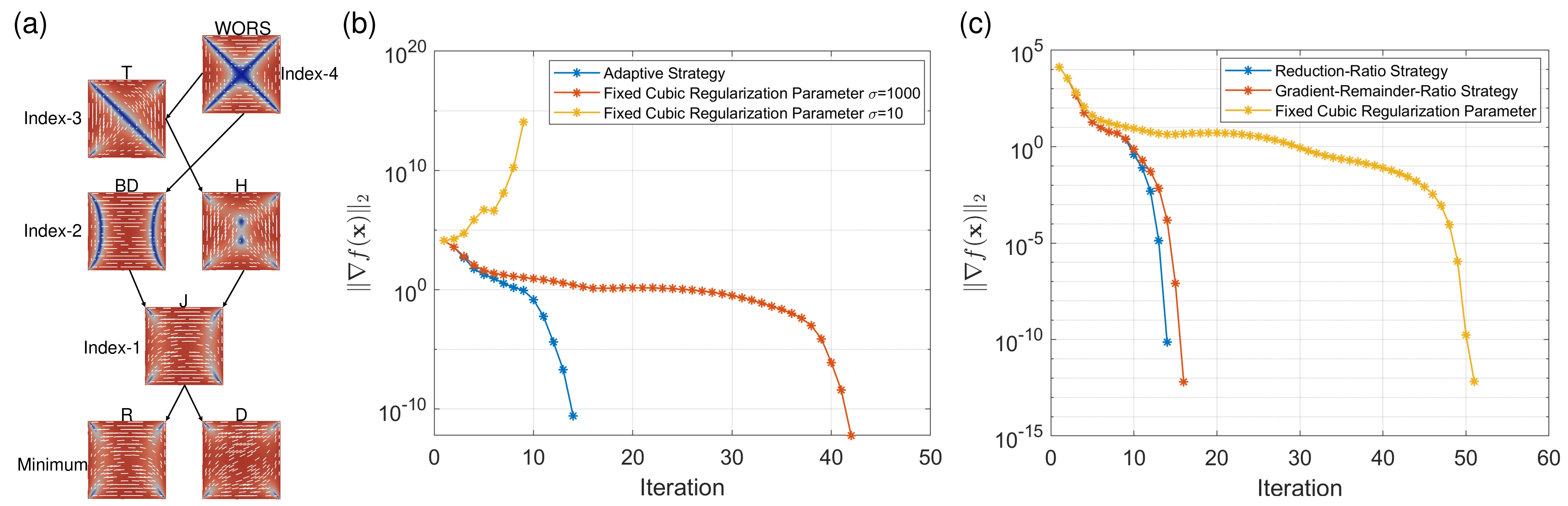}
    \caption{
Solution landscape and performance of the adaptive cubic regularization strategy for the discrete Landau--de Gennes energy functional.
(a) Solution landscape for $\lambda^2=30$, up to symmetry. The color bar labels the order parameter $\sqrt{|\mathbf{Q}|^2/2}$ and the white lines represent the directors, i.e., the eigenvector corresponding to the largest eigenvalue of $\mathbf{Q}$.
(b) Convergence histories for computing an index-$2$ saddle point from a random initial condition using the adaptive strategy and fixed cubic regularization parameters $\sigma=1000$ and $\sigma=10$. (c) Convergence histories for computing a local minimum from a random initial condition using the reduction-ratio strategy, our gradient-remainder-ratio strategy in \eqref{eq: adaptive parameter}, and a fixed cubic regularization parameter.}
    \label{fig:3}
\end{figure}

The Landau--de Gennes (LdG) energy functional is a continuum model for nematic liquid crystals (NLCs), in which the orientational order of the NLC state is described by a macroscopic $\mathbf{Q}$-tensor order parameter. The non-dimensionalized reduced LdG energy functional is given by \cite{wang2019order}
$$
  E[\mathbf{Q}(x,y)]: = \int_{\mathcal{D}} \dfrac{1}{2}|\nabla \mathbf{Q}(x,y)|^2+ \lambda^2\left(-\dfrac{B^2}{8C^2}\mathrm{tr}(\mathbf{Q}(x,y)^2)+\dfrac{1}{8}\left(\mathrm{tr}(\mathbf{Q}(x,y)^2)\right)^2\right) \mathrm{d}x \mathrm{d}y.
$$
where $\mathcal{D}=[-1,1]^2$ is the non-dimensionalized domain. We set  $\lambda^2=30$, while the other parameters are kept the same as in \cite{yin2020construction}. We impose tangential Dirichlet boundary conditions, requiring the liquid crystal molecules to align along the boundary, consistent with the experimental setting described in \cite{tsakonas2007multistable}.
We use a finite-difference method for the spatial discretization with mesh size $\delta x=1/32$. The tensor field $\mathbf{Q}(x,y)$ is then represented by a $d$-dimensional vector $\x$, where $d$ is determined by the number of spatial grid points, and hence depends on $\delta x$; 
the energy functional $E[\mathbf{Q}(x,y)]$ is discretized into a function $f(\x)$. The discrete inner product is defined by $\langle \x,\y\rangle_{\delta x} = \delta x^2 \x^\top \y$, with the corresponding norm denoted by $\|\cdot\|_{\delta x}$. The invariant subspace associated with the $k$ smallest eigenvalues is computed
iteratively and warm-started using the LOBPCG. The cubic subproblem is solved in a Krylov subspace of dimension $200$. For the adaptive cubic regularization strategy, we initialize
$\sigma_0=1000$ and use $
\gamma_{\rm dec}=0.6,
\gamma_{\rm inc}=2, d_l=0.5,
d_u=1, \sigma_{\min}=0.1, \sigma_{\max}=10^4$.

Starting from a random initial condition, \Cref{algorithm 1.1} with $k=4$ locates the index-$4$ saddle point, represented by WORS, which is characterized by two line defects along the diagonals of the square. Starting from WORS, we then search for lower-index critical points in descending order of Morse index to construct the solution landscape, as described in \Cref{sec: solution landscape}. Up to symmetry, we identify seven distinct critical points for $k\leq 4$, as shown in \Cref{fig:3}(a). These include two local minima, referred to as the diagonal (D) state and the rotational (R) state, which correspond to experimentally observed stable configurations in square confinements \cite{tsakonas2007multistable}.

In \Cref{fig:3}(b-c), we examine the performance of our adaptive strategy for selecting the cubic regularization parameter. In \Cref{fig:3}(b), we consider the computation of an index-$2$ saddle point from a random initial condition. For a fixed cubic regularization parameter, if $\sigma$ is relatively small, e.g., $\sigma=10$, the algorithm becomes unstable and the iterates diverge, as shown by the yellow curve. This occurs because the step size is excessively large during the initial iterations, so that the local model in the subproblem no longer provides an adequate approximation to $f$. In contrast, when $\sigma$ is very large, e.g., $\sigma=1000$, the algorithm remains stable but converges slowly, as illustrated by the orange curve. The blue curve corresponds to the adaptive strategy in \eqref{eq: adaptive parameter}, which achieves a better balance between robustness and efficiency and therefore reaches the saddle point in fewer iterations than the method with a large fixed value of $\sigma$. In \Cref{fig:3}(c),
for comparison purposes,
we consider the computation of an index-$0$ critical point, i.e., a local minimum,  starting from a random initial condition. This setting allows us to compare our adaptive strategy with the standard adaptive strategy commonly used in cubic-regularization methods for optimization. Both adaptive strategies outperform the methods with fixed regularization parameters and exhibit comparable performance. This suggests that our approach may also serve as an alternative adaptive strategy for optimization, while having the advantage that it extends naturally to saddle-point computations.

\section{Conclusion and discussion}\label{sec: conclusion}
In this paper, we propose a second-order algorithm for locating critical points of a prescribed index and for constructing solution landscapes. Under suitable assumptions, we establish a quadratic local convergence rate when the cubic subproblem and the unstable subspace are computed exactly. For inexact computations, we derive a linear convergence result whose contraction factor depends explicitly on the approximation accuracies of the unstable subspace and the subproblem solution. We further show that the fixed points of the proposed algorithm are precisely index-$k$ critical points, which distinguishes our method from the classical Newton method and provides a direct mechanism for targeting critical points of a prescribed Morse index. A key algorithmic advance of the proposed method is that its subproblem has the same form as that arising in cubic-regularization methods for optimization. This allows existing efficient solvers for cubic subproblems to be directly incorporated into our framework, making the method suitable for large-scale computations and applications to high-dimensional systems. We also introduce an adaptive strategy for updating the cubic regularization parameter, which improves the robustness and efficiency of the algorithm. The numerical experiments confirm the theoretical convergence results, demonstrate the effectiveness of the adaptive strategy, and illustrate the practical capability of the method for constructing solution landscapes.

Several interesting directions remain for future investigation. The present framework may be extended directly to trust-region methods, since cubic regularization and trust-region methods share closely related mechanisms for controlling the step size and ensuring the reliability of local quadratic models. Furthermore, variants of cubic-regularization and trust-region methods have been developed for large-scale optimization, such as stochastic \cite{tripuraneni2018stochastic} and subsample \cite{kohler2017sub} models. Adapting these ideas to the computation of critical points with prescribed index may further broaden the applicability and scalability of the proposed framework.

\normalem
\bibliographystyle{siamplain}
\bibliography{references}

\end{document}